\numberwithin{equation}{section}
\numberwithin{equation}{section}
\newtheorem{thm}{\indent Theorem}[section]
\newtheorem{cor}[thm]{\indent Corollary}
\newtheorem{lem}[thm]{\indent Lemma}
\newtheorem{prop}[thm]{\indent Proposition}
\newtheorem{rmk}{{\indent\bf Remark}}[section]
\newcommand{\mb}{\mbox}
\newcommand{\hs}{\hspace}
\newcommand{\ol}{\overline}
\newcommand{\ttiny}{\fontsize{5pt}{\baselineskip}\selectfont}
\newcommand{\strl}[2]{\stackrel{\mbox{\ttiny $#1$}}{#2}}
\newcommand{\td}{\tilde}
\newcommand{\fr}{\frac}
\newcommand{\edd}{\end{document}}
\newcommand{\be}{\begin{equation}}
\newcommand{\ee}{\end{equation}}
\newcommand{\bsl}{\backslash}
\newcommand{\lagl}{\langle}
\newcommand{\ragl}{\rangle}
\newcommand{\lmx}{\left(\begin{matrix}}
\newcommand{\rmx}{\end{matrix}\right)}
\newcommand{\ldt}{\left|\begin{matrix}}
\newcommand{\rdt}{\end{matrix}\right|}
\newcommand{\hess}{{\rm Hess\,}}
\newcommand{\tr}{{\rm tr\,}}
\newcommand{\const}{{\rm const}}
\newcommand{\bbr}{{\mathbb R}}
\newcommand{\bbh}{{\mathbb H}}
\newcommand{\bbs}{{\mathbb S}}
\newcommand{\mo}{M\"obius }
\newcommand{\ba}{\begin{array}}
\newcommand{\ea}{\end{array}}
\newcommand{\nnm}{\nonumber}
\newcommand{\beal}{\begin{align}}
\newcommand{\eal}{\end{align}}
\newcommand{\bea}{\begin{eqnarray}}
\newcommand{\eea}{\end{eqnarray}}
\newcommand{\spn}{{\rm Span\,}}
\newcommand{\stx}[2]{\strl{(#1)}{#2}}
\newcommand{\Ba}{\stx{1}{B}{}\!}
\newcommand{\Bb}{\stx{2}{B}{}\!}
\begin{document}

\title[Submanifolds in the unit sphere with
parallel Blaschke tensor]{On the immersed
submanifolds in the unit sphere\\ with parallel Blaschke tensor} 

\author[X. X. Li]{Xingxiao Li} 

\author[H. R. Song]{Hongru Song ${}^*$} 

\dedicatory{}

\subjclass[2000]{ 
Primary 53A30; Secondary 53B25. }
%
\keywords{ 
parallel Blaschke tensor, vanishing \mo form, constant scalar curvature, parallel mean curvature vector.}
\thanks{ 
Research supported by
Foundation of Natural Sciences of China (No. 11171091, 11371018).}
\address{
Department of Mathematics
\endgraf Henan Normal University \endgraf Xinxiang 453007, Henan
\endgraf P.R. China}
\email{xxl$@$henannu.edu.cn}

\address{
Department of Mathematics
\endgraf Henan Normal University \endgraf Xinxiang 453007, Henan
\endgraf P.R. China} %
\email{yaozheng-shr@163.com}


\thanks{${}^*$The corresponding author.}
\begin{abstract}
As is known, the Blaschke tensor $A$ (a symmetric covariant $2$-tensor) is one of the fundamental \mo invariants in the \mo differential geometry of submanifolds in the unit sphere $\bbs^n$, and the eigenvalues of $A$ are referred  to as the Blaschke eigenvalues. In this paper, we shall prove a classification theorem for immersed
umbilic-free submanifolds in $\bbs^n$ with a parallel Blaschke tensor. For
proving this classification, some new kinds of examples are first defined.
\end{abstract}

\maketitle

\section{Introduction}

Let $\bbs^n(r)$ be the standard $n$-dimensional sphere in the $(n+1)$-dimensional Euclidean space $\bbr^{n+1}$ of radius $r$,
and denote $\bbs^n=\bbs^n(1)$.
Let $\bbh^n(c)$ be the $n$-dimensional hyperbolic space of constant
curvature $c<0$ defined by
$$
\bbh^n(c)=\{y=(y_0,y_1)\in\bbr^{n+1}_1\,;\ \lagl y,y\ragl_1=\fr1c,\
y_0>0\},
$$
where, for any integer $N\geq 2$,
$\bbr^N_1\equiv\bbr_1\times\bbr^{N-1}$ is the $N$-dimensional
Lorentzian space with the standard Lorentzian inner product
$\lagl\cdot,\cdot\ragl_1$ given by
$$
\lagl y,y'\ragl_1=-y_0y'_0+y_1\cdot y'_1,\quad
y=(y_0,y_1),\,y'=(y'_0,y'_1)\in\bbr^N_1
$$
in which the dot ``$\cdot$'' denotes the standard Euclidean inner product
on $\bbr^{N-1}$. From now on, we simply write $\bbh^n$ for $\bbh^n(-1)$.

Denote by $\bbs^n_+$ the hemisphere in $\bbs^n$ whose first coordinate is
positive. Then there are two conformal diffeomorphisms
$$\sigma:\bbr^n\to \bbs^n\bsl\{(-1,0)\}\ \mb{ and }\ \tau:\bbh^n\to \bbs^n_+$$
defined as follows:

\begin{align}
\sigma(u)&=\left(\fr{1-|u|^2}{1+|u|^2},\fr{2u}{1+|u|^2}\right),\quad
u\in \bbr^n, \label{1.1}\\
\tau(y)&=\left(\fr1{y_0},\fr{y_1}{y_0}\right), \quad y=(y_0,y_1)\in
\bbh^n\subset\bbr^{n+1}_1. \label{1.2}
\end{align}

Let $x:M^m\to \bbs^{m+p}$ be an immersed umbilic-free submanifold in
$\bbs^{m+p}$. Then there are four
fundamental \mo invariants of $x$, in terms of the light-cone model established by C. P. Wang in 1998 (\cite{w}), namely, the
\mo metric $g$, the Blaschke tensor $A$, the
\mo second fundamental form $B$ and the \mo form $C$. Since the pioneer work of Wang, there have been obtained many interesting results in the \mo geometry of submanifolds including some important classification theorems of submanifolds with particular \mo invariants, such as, the classification of surfaces
with vanishing \mo forms (\cite{lw2}), that of \mo
isotropic submanifolds (\cite{lwz}), that of hypersurfaces with constant \mo sectional curvature (\cite{gllmw}), that of \mo isoparametric hypersurfaces (\cite{llwz}, \cite{hld}, \cite{ltqw}, etc), and that of hypersurfaces with Blaschke tensors linearly dependent on
the \mo metrics and \mo second  fundamental forms \cite{lw1}, which is later generalized by \cite{lz05} and \cite{clq}, respectively, in two different directions. Here we should remark that, after the classification of all immersed hypersurfaces in $\bbs^{m+1}$ with parallel \mo second fundamental forms (\cite{hl1}), Zhai-Hu-Wang recently proved in \cite{zhw} an interesting theorem which classifies all $2$-codimensional umbilic-free submanifolds in the unit sphere with parallel \mo second  fundamental forms.

To simplify matters, we briefly call an umbilic-free submanifold {\em \mo parallel} if its \mo second fundamental form is parallel.

As for other \mo invariants, it is much natural to study submanifolds in the unit sphere $\bbs^n$ with particular Blaschke tensors. Note that a
submanifold in $\bbs^n$ with vanishing Blaschke tensor also has a
vanishing \mo form, and therefore is a special \mo isotropic
submanifold; any \mo isotropic submanifold is necessarily of parallel Blaschke tensor. Furthermore, all \mo parallel submanifolds also have vanishing \mo forms and parallel Blaschke tensors(\cite{zhw}). So the next natural thing is, of course, to seek a classification of all the submanifolds with parallel Blaschke tensors.

To this direction, the first step is indeed the study of hypersurfaces. In fact, the following theorem has been established:

\begin{thm}[\cite{lz06}]\label{hypcase}
 Let $x:M^m\to \bbs^{m+1}$, $m\geq 2$, be an umbilic-free immersed
hypersurface. If the Blaschke tensor $A$ of $x$ is
parallel, then the \mo form of $x$ vanishes identically and $x$ is either \mo parallel, or \mo isotropic, or \mo equivalent to one of the following examples which have exactly two distinct Blaschke eigenvalues:

$(1)$ one of the minimal hypersurfaces as indicated in
Example $3.2$ of \cite{lz06};

$(2)$ one of the non-minimal hypersurfaces as indicated in
Example $3.3$  of \cite{lz06}.
\end{thm}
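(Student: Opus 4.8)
The plan is to run the whole argument through C.~P.~Wang's integrability (structure) equations for an umbilic-free hypersurface $x:M^m\to\bbs^{m+1}$, which couple the four basic invariants $g,A,B,C$. In a local $g$-orthonormal coframe $\{\omega_i\}$, with commas denoting covariant differentiation in the \mo metric, these read
\[
A_{ij,k}-A_{ik,j}=B_{ik}C_j-B_{ij}C_k,\qquad
B_{ij,k}-B_{ik,j}=\delta_{ij}C_k-\delta_{ik}C_j,
\]
\[
C_{i,j}-C_{j,i}=\sum_k\big(B_{ik}A_{kj}-A_{ik}B_{kj}\big),
\]
together with the \mo Gauss equation expressing the curvature $R_{ijkl}$ of $g$ through $A$ and $B$, and the normalizations $\tr B=0$, $\sum_{i,j}B_{ij}^2=\tfrac{m-1}{m}$. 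The very first step is to feed $\nabla A=0$ into the first equation: choosing the frame to diagonalize $B$, with eigenvalues $b_i$, the relation collapses to $b_iC_j=0$ for all $i\neq j$. Since $x$ is umbilic-free we have $B\not\equiv0$, so $\sum_i b_i^2=\tfrac{m-1}{m}>0$; were some $C_{j_0}\neq0$ we would get $b_i=0$ for every $i\neq j_0$, whence $b_{j_0}=-\sum_{i\neq j_0}b_i=0$, a contradiction. Thus $C\equiv0$, which is the first assertion of the theorem.

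With $C=0$ the third structure equation becomes $[A,B]=0$, so $A$ and $B$ admit a common $g$-orthonormal eigenframe; write $A_i,b_i$ for the simultaneous eigenvalues. Because $A$ is parallel its eigenvalues $A_i$ are locally constant and its eigendistributions are smooth, parallel and integrable, so by the de~Rham theorem $(M,g)$ splits locally as a Riemannian product whose factors are the integral manifolds of the Blaschke eigendistributions. Moreover the second structure equation now says that $B_{ij,k}$ is totally symmetric (a Codazzi tensor), and tracing it reproduces $C_k=-\tfrac1{m-1}\sum_iB_{ik,i}=0$, consistently.

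Next I would classify according to the number $s$ of distinct Blaschke eigenvalues. If $s=1$ then $A=\lambda g$ with $\lambda$ a constant, which is precisely the \mo isotropic case. If $s\ge2$, I would contract the \mo Gauss equation to express the sectional curvatures of $g$ through the products $b_ib_j$ and the constants $A_i$; since the splitting is parallel, the connection forms mixing two different Blaschke eigenspaces vanish, and inserting this into the total symmetry of $B_{ij,k}$ shows that each $b_i$ is constant along its own factor and can vary only in the complementary directions. A careful eigenvalue bookkeeping then yields the dichotomy: either every $b_{i,k}=0$, so that $\nabla B=0$ and $x$ is \mo parallel, or $B$ can fail to be parallel only when there are exactly two distinct (constant) Blaschke eigenvalues, arranged on a two-factor product $M=M_1\times M_2$. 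Ruling out three or more distinct Blaschke eigenvalues in the non-parallel case, and fixing how the $b_i$ are distributed between the two factors, is the technical core.

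Finally, in the remaining (two-eigenvalue, non-parallel) case I would integrate the resulting first-order system for $\rho$, $H$ and the principal curvatures $k_i$ of the original immersion along the two product factors; this produces exactly the data realized by the minimal hypersurfaces of Example~$3.2$ and the non-minimal ones of Example~$3.3$ of \cite{lz06}. The identification is then closed by the \mo congruence theorem: two umbilic-free hypersurfaces sharing the same $(g,A,B)$ with $C=0$ are \mo equivalent, so $x$ must be \mo equivalent to one of these examples. I expect the genuine obstacle to lie in this last stage---carrying out the eigenvalue analysis that forces precisely two Blaschke eigenvalues and then explicitly integrating the structure equations to match the examples---rather than in the short derivations of $C\equiv0$ and $[A,B]=0$.
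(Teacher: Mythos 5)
Your opening derivations are correct and are essentially the standard ones: feeding $\nabla A=0$ into the Ricci identity \eqref{2.17}, evaluated pointwise in a frame diagonalizing $B$, gives $b_iC_j=0$ for $i\neq j$, which together with $\tr B=0$ and $|B|^2=\fr{m-1}m>0$ from \eqref{2.14} forces $C\equiv 0$; with $C=0$, \eqref{2.16} gives $[A,B]=0$, the parallel Blaschke eigendistributions yield the local de Rham splitting, and the one-eigenvalue case is exactly the \mo isotropic one. Be aware, however, that the paper itself does not prove Theorem \ref{hypcase} at all --- it is quoted from \cite{lz06}; the only comparable machinery in the paper is the proof of Theorem \ref{main} in Section 4, which works in arbitrary codimension but \emph{assumes} $C=0$ and two distinct Blaschke eigenvalues.

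Measured against that machinery, your proposal has a genuine gap: everything after the splitting --- which is the actual content of the theorem --- is announced rather than proved. The ``careful eigenvalue bookkeeping'' is never exhibited, and the one concrete claim you make inside it is backwards: total symmetry of $B_{ij,k}$ plus the vanishing of the mixed connection forms gives $E_k(b_i)=(b_i-b_k)\,\omega^k_i(E_i)=0$ whenever $E_k$ is tangent to the factor \emph{not} containing $E_i$, so each $b_i$ can vary only within its own factor (as it must, since in the examples the restriction of $B$ to a factor is the generally non-parallel second fundamental form of a minimal immersion of that factor). The missing engine is the \mo Gauss equation on the product: since $R_{i_1j_2k_2l_1}=0$, equation \eqref{2.15} together with \eqref{4-1} and \eqref{4-2} yields
\begin{equation*}
B_{i_1j_1}B_{i_2j_2}=-(\lambda_1+\lambda_2)\,\delta_{i_1j_1}\delta_{i_2j_2},
\end{equation*}
the $p=1$ case of Lemma \ref{lem4-1}. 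It is this identity, not generic bookkeeping, that settles the case analysis: if $s\geq 3$, or if $s=2$ with $\lambda_1+\lambda_2\neq 0$, each block of $B$ is forced to be a constant multiple of the identity, hence $\nabla B=0$ and $x$ is \mo parallel; the only possible non-parallel situation is $s=2$ with $\lambda_1+\lambda_2=0$, where $B$ is supported on a single factor. Moreover, the identification of that last case with Examples 3.2/3.3 of \cite{lz06} is not achieved by ``integrating a first-order system for $\rho$, $H$ and the $k_i$'' --- no such system is written down, and these are not \mo invariants --- but by constructing the constant vectors $y=-\fr1{2\lambda_2}(N-\lambda_2Y)$ and $y_2=\fr1{2\lambda_2}(N+\lambda_2Y)$ in $\bbr^{m+3}_1$, checking (as in Lemma \ref{lem4-5}) that the two factors with the restricted $B$ satisfy the Gauss--Codazzi equations of $\bbh^{m_1+p_1}(-1/r^2)$ and $\bbs^{m_2+p_2}(r)$ and are minimal with constant scalar curvature, and then closing with the congruence theorem (Theorem \ref{wth}). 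Without these two ingredients your outline establishes only the easy preliminary claims and restates the classification as a goal.
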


As the second step, we shall prove in this paper a classification theorem for all immersed submanifolds in $\bbs^{n}$ with vanishing \mo forms, parallel Blaschke tensors and two distinct Blaschke eigenvalues. To do this, we first need as usual to seek as many as possible examples. As a matter of fact, we successfully construct a new class of immersed submanifolds denoted by ${\rm LS}(m_1,p_1,r,\mu)$ which, as desired,
have vanishing \mo forms and parallel Blaschke tensors with two distinct Blaschke eigenvalues. But they are in general not \mo parallel (see Section \ref{sec3}). It turns out that this class of new examples include those two kinds of examples listed in Theorem \ref{hypcase} that were first introduced in \cite{lz06} (see also \cite{lz07}) and are the only Blaschke isoparametric hypersurfaces (first formally defined in \cite{lz09}) with two distinct Blaschke eigenvalues. Here we should remark that, recently by Li-Wang in \cite{ltw}, any Blaschke isoparametric hypersurfaces with more than two distinct Blaschke eigenvalues must be \mo isoparametric, giving an affirmative solution of the problem originally raised in \cite{lz09} (see also \cite{lp1} and \cite{lp2}). Note that, by a very recent paper \cite{ltqw} and the characterization theorem in \cite{rt}, the \mo isoparametric hypersurfaces which were first introduced by \cite{llwz} have been completely classified. So the above-mentioned theorem by Li-Wang in fact finishes the classification of all the Blaschke isoparametric hypersurfaces and, before this final result, the latest partial classification theorem was proved in \cite{hlz}. By the way, as stated in Theorem \ref{hypcase}, all hypersurfaces with parallel Blaschke tensors necessarily have vanishing \mo forms and thus are special examples of Blaschke isoparametric ones. We also remark that some parallel results for space-like hypersurfaces in the de Sitter space $\bbs^n_1$ have been obtained recently (see \cite{ls1}, \cite{ls2} and the references therein).

Recall that a Riemannian submanifold is said to be {\em pseudo-parallel} if the inner product of its second fundamental form with the mean curvature vector is parallel. In particular, if the second fundamental form is itself parallel, then we simply call this submanifold {\em (Euclidean) parallel}.

Now the main theorem of this paper can be stated as follows:

\begin{thm}\label{main} Let $x:M^m\to \bbs^{m+p}$ be an umbilic-free submanifold immersed in $\bbs^{m+p}$ with parallel Blaschke tensor $A$ and vanishing \mo form $C$. If $x$ has two distinct Blaschke eigenvalues, then it
must be \mo equivalent to one of the following four kinds of immersions:

$(1)$ a non-minimal and umbilic-free pseudo-parallel immersion $\td x:M^m\to \bbs^{m+p}$ with parallel mean curvature and constant scalar curvature, which has two distinct principal curvatures in the direction of the mean curvature vector;

$(2)$ the image under $\sigma$ of a non-minimal and umbilic-free pseudo-parallel immersion $\bar x:M^m\to \bbr^{m+p}$ with parallel mean curvature and constant scalar curvature, which has two distinct principal curvatures in the direction of the mean curvature vector;

$(3)$ the image under $\tau$ of a non-minimal and umbilic-free pseudo-parallel immersion $\bar x:M^m\to \bbh^{m+p}$ with parallel mean curvature and constant scalar curvature, which has two distinct principal curvatures in the direction of the mean curvature vector;

$(4)$ a submanifold ${\rm LS}(m_1,p_1,r,\mu)$ given in Example \ref{expl3.2} for some parameters $m_1,p_1,r,\mu$.
\end{thm}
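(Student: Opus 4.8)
The plan is to substitute the two hypotheses into the full system of integrability equations for the \mo invariants, extract from them a parallel eigenspace splitting together with a commutation relation that rigidifies the \mo second fundamental form $B$, and then read off the underlying geometry through the light-cone model. Concretely, I would first record the \mo structure equations of $x$ in a local orthonormal frame $\{E_i\}$ for the \mo metric $g$ with adapted normal frame $\{E_\alpha\}$: the Gauss equation expressing the curvature of $g$ in terms of $A$ and $B$, the Codazzi equation $B^\alpha_{ij,k}-B^\alpha_{ik,j}=\delta_{ij}C^\alpha_k-\delta_{ik}C^\alpha_j$, the relation $A_{ij,k}-A_{ik,j}=\sum_\alpha(B^\alpha_{ij}C^\alpha_k-B^\alpha_{ik}C^\alpha_j)$, the Ricci-type relation $C^\alpha_{i,j}-C^\alpha_{j,i}=\sum_k(A_{ik}B^\alpha_{kj}-B^\alpha_{ik}A_{kj})$, and the normalizations $\tr B^\alpha=0$ and $\sum_{\alpha,i,j}(B^\alpha_{ij})^2=\fr{m-1}{m}$. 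Inserting $\nabla A=0$ (so $A_{ij,k}=0$) and $C\equiv0$ (so $C^\alpha_i\equiv0$ and $C^\alpha_{i,j}\equiv0$), the Codazzi equation reduces to $B^\alpha_{ij,k}=B^\alpha_{ik,j}$, while the Ricci-type relation collapses to $\sum_k(A_{ik}B^\alpha_{kj}-B^\alpha_{ik}A_{kj})=0$, that is, each symmetric operator $B^\alpha$ commutes with $A$.

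Since $A$ is parallel and symmetric with exactly two distinct eigenvalues, those eigenvalues $\lambda_1\neq\lambda_2$ are constant on $M$ and the tangent bundle splits $g$-orthogonally as $TM=V_1\oplus V_2$ into the two parallel eigendistributions, of dimensions $m_1,m_2$ with $m_1+m_2=m$; by the de Rham decomposition theorem $(M,g)$ is locally the Riemannian product of the corresponding leaves. The commutation $[A,B^\alpha]=0$ then forces each $B^\alpha$ to be block-diagonal with respect to $V_1\oplus V_2$. Feeding this back into the Gauss equation, the vanishing of the mixed sectional curvatures of the product gives, for $i\in V_1$ and $j\in V_2$, that $\sum_\alpha B^\alpha_{ii}B^\alpha_{jj}$ equals the constant $-(\lambda_1+\lambda_2)$; combined with $B^\alpha_{ij,k}=B^\alpha_{ik,j}$, $\nabla A=0$ and the trace normalization, I would deduce that the block traces of $B$ are constant and that the \mo scalar curvature is constant (the latter already following from the constancy of $\tr A$ together with the fixed value of $\sum(B^\alpha_{ij})^2$). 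The upshot is that $B$, though not necessarily parallel, is rigid enough to yield the asserted space-form geometry once the reduction of the next step is carried out.

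I would then pass to the light-cone model, lifting $x$ to the \mo position vector $Y:M\to\bbr^{m+p+2}_1$ valued in the positive light cone, whose second-order frame equations are governed precisely by $A$ and $B$. Because $A$ is parallel with the simple two-eigenvalue spectrum, I expect to manufacture from $Y$, $dY$ and the eigenvalue data a constant vector $\xi\in\bbr^{m+p+2}_1$; its causal type --- timelike, null, or spacelike --- is dictated by the associated eigenvalue and splits the analysis into three cases. Through the conformal diffeomorphisms $\sigma$ and $\tau$ of \eqref{1.1} and \eqref{1.2}, these three types identify $x$, up to \mo equivalence, with an immersion into $\bbs^{m+p}$, $\bbr^{m+p}$, or $\bbh^{m+p}$; and in each model the conditions $\nabla A=0$ and $C=0$ translate into parallel mean curvature vector, constant scalar curvature, and pseudo-parallelism, with the two Blaschke eigenvalues corresponding exactly to the two distinct principal curvatures in the direction of the mean curvature vector. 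These are the cases $(1)$--$(3)$.

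The remaining possibility is the genuinely \mo situation in which no single constant vector $\xi$ reduces $x$ to one space form --- heuristically, when the two de Rham factors carry incompatible ``radii'' --- and here the immersion must assemble into the warped construction ${\rm LS}(m_1,p_1,r,\mu)$ of Example \ref{expl3.2}, which is case $(4)$. To close the argument I would verify the converse, namely that each of the four families does have parallel $A$, vanishing $C$, and two Blaschke eigenvalues: for the space-form families this follows from the standard \mo formulas applied to pseudo-parallel immersions with parallel mean curvature, and for ${\rm LS}(m_1,p_1,r,\mu)$ from the direct computation in Section \ref{sec3}. The principal obstacle I anticipate is precisely the dichotomy isolating case $(4)$ from cases $(1)$--$(3)$: deciding, from the block structure of $B$ and the eigenvalue data, exactly when the constructed constant vector degenerates, and then proving that in that degenerate regime the data are rigid enough to force the specific ${\rm LS}$ model rather than a larger family. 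A second technical burden, absent from the hypersurface setting of Theorem \ref{hypcase}, is controlling the normal bundle in codimension $p$ through the Ricci equation, so that the translation between the \mo conditions and the space-form conditions can be made precise.
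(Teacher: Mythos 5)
Your first half is sound and matches the paper: inserting $C\equiv 0$ and $\nabla A=0$ into the Ricci-type identity \eqref{2.16} forces each $B^\alpha$ to commute with $A$, hence to be block-diagonal with respect to the two parallel eigendistributions; the de Rham splitting and the Gauss-equation relation $\sum_\alpha B^\alpha_{i_1i_1}B^\alpha_{j_2j_2}=-(\lambda_1+\lambda_2)$ are exactly \eqref{4-2} and \eqref{4-3} of the paper. But from that point on there is a genuine gap, and it is the one you yourself flag as ``the principal obstacle'': you never decide when the constant vector exists and when it degenerates, and that decision is the entire content of the classification. In the paper the criterion is completely explicit: $\lambda_1+\lambda_2\neq 0$ gives cases $(1)$--$(3)$, and $\lambda_1+\lambda_2=0$ gives case $(4)$. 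Moreover, the constant vector cannot be manufactured from ``$Y$, $dY$ and the eigenvalue data'' as you propose: the paper's vector is ${\mathbf c}=N+aY+bE_{\alpha_0}$, which needs the \mo biposition vector $N$ (to produce the $Y_i$-terms that cancel against $aY$ and $bE_{\alpha_0}$ in \eqref{2-6}--\eqref{2-8}) and, crucially, a distinguished parallel unit normal $E_{\alpha_0}$ spanning the line $V_0$ on which the traces of both blocks of $B$ lie. That line is one-dimensional precisely when $\lambda_1+\lambda_2\neq 0$ (Lemma \ref{lem4.2} and \eqref{4-11}), and the linear system for $a,b$ is solvable because $B^{\alpha_0}_1-B^{\alpha_0}_2=m\sqrt{(\lambda_1+\lambda_2)/(m_1m_2)}\neq 0$. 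When $\lambda_1+\lambda_2=0$ one has $V_0=0$, no such normal direction exists, and instead the \mo normal bundle itself splits as $V_1\oplus V_2$ compatibly with the tangential splitting.

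The second half of the missing work is the reconstruction in the degenerate case, which you dismiss with the heuristic ``incompatible radii.'' The paper's argument is: with $2\lambda_1=-r^{-2}$, $2\lambda_2=r^{-2}$, each block of $B$ together with its factor metric satisfies the Gauss, Codazzi and Ricci equations of a submanifold in a space form of curvature $2\lambda_1<0$, respectively $2\lambda_2>0$ (Lemma \ref{lem4-5}); the fundamental theorem of submanifolds then realizes the factors as minimal immersions $\td y:(M_1,g^{(1)})\to\bbh^{m_1+p_1}(-1/r^2)$ and $\td y_2:(M_2,g^{(2)})\to\bbs^{m_2+p_2}(r)$; their scalar curvatures are constant by a separation argument ($\td S_1+\td S_2$ is constant by \eqref{2.14}, while $\td S_1$ lives on $M_1$ and $\td S_2$ on $M_2$ alone); and finally \mo equivalence to ${\rm LS}(m_1,p_1,r,\mu)$ follows from the uniqueness theorem (Theorem \ref{wth}) after checking that $x$ and the model have the same \mo metric, the same \mo second fundamental form and the same \mo normal connection. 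Without these two pieces --- the explicit $\lambda_1+\lambda_2$ dichotomy with the normal direction $E_{\alpha_0}$, and the Gauss--Codazzi--Ricci realization plus uniqueness in the degenerate case --- your outline does not yet constitute a proof.
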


\begin{rmk}\label{rmk1}\rm In deed, it is directly verified that each of the immersed
submanifolds stated in Theorem \ref{main} has parallel Blaschke tensors and vanishing \mo forms (see Section \ref{sec3}). In fact, some of the examples we shall define in Section \ref{sec3} are new and somewhat more general which can have more than two distinct Blaschke eigenvalues.
\end{rmk}

\begin{rmk}\rm According to \cite{lz09}, an immersed umbilic-free submanifolds in the unit sphere $\bbs^n$ is called {\em Blaschke isoparametric} if (1) the \mo form vanishes identically and (2) all the Blaschke eigenvalues are constant. By carefully checking the argument in this paper, or directly using Proposition A.1 in \cite{ltqw}, one easily finds that we have in fact classified all the Blaschke isoparametric submanifolds in $\bbs^n$ with two distinct Blaschke eigenvalues.
\end{rmk}

\section{Preliminaries}\label{sec2}

Let $x:M^m\to \bbs^{m+p}$ be an immersed umbilic-free submanifold. Denote by $h$ the second fundamental form of $x$
and $H=\fr1m\tr h$ the mean curvature vector field. Define
\be\label{2.1}
\rho=\left(\fr m{m-1}\left(|h|^2-m|H|^2\right)\right)^{\fr12},\quad
Y=\rho(1,x).
\ee
Then $Y:M^m\to \bbr^{m+p+2}_1$ is an immersion
of $M^m$ into the Lorentzian space $\bbr^{m+p+2}_1$ and is called the
canonical lift (or {\em the \mo position vector}) of $x$. The function
$\rho$ given by (2.1) may be called {\em the \mo factor} of the
immersion $x$. We define
$$
C^{m+p+1}_+=\left\{y=(y_0,y_1)\in\bbr_1\times\bbr^{m+p+1}\,;\ \lagl
y,y\ragl_1=0,\ y_0>0\right\}.
$$
Let $O(m+p+1,1)$ be the Lorentzian
group of all elements in $GL(m+p+2;\bbr)$ preserving the standard
Lorentzian inner product $\lagl\cdot,\cdot\ragl_1$ on
$\bbr^{m+p+2}_1$, and $O^+(m+p+1,1)$ be a subgroup of $O(m+p+1,1)$ given
by
\be\label{2.2}
O^+(m+p+1,1)=\left\{T\in O(m+p+1,1)\,;\ T(C^{m+p+1}_+)\subset
C^{m+p+1}_+\right\}.\ee

Then the following theorem is well known.

\begin{thm}\label{wth1} $($\cite{w}$)$ Two submanifolds $x,\td x:M^m\to \bbs^{m+p}$ with
\mo position vectors $Y,\td Y,$ respectively, are \mo equivalent if
and only if there is a $T\in O^+(m+p+1,1)$ such that $\td Y=T(Y)$.
\end{thm}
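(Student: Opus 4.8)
The plan is to prove the two implications separately through the projective light-cone model, in which $\bbs^{m+p}$ is the space of forward null rays in $\bbr^{m+p+2}_1$ and the \mo (conformal) group of $\bbs^{m+p}$ is identified with $O^+(m+p+1,1)$. The whole argument rests on two properties of the canonical lift: that the Lorentzian metric it induces equals the \mo metric $g$, and that $Y$ is the \emph{unique} positive null lift of $x$ that induces $g$.

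First I would record the model. Writing a point of $\bbs^{m+p}$ as the forward ray $\bbr_{>0}\cdot(1,x)$ in $C^{m+p+1}_+$, each $T\in O^+(m+p+1,1)$ carries forward null rays to forward null rays and so induces a diffeomorphism $\Phi_T$ of $\bbs^{m+p}$, $\Phi_T(x)=(T(1,x))_1/(T(1,x))_0$, and these are exactly the \mo transformations. Since $|x|^2=1$ gives $\lagl(1,x),(1,x)\ragl_1=0$ and $\lagl(1,x),(0,dx)\ragl_1=0$, the lift $Y=\rho(1,x)$ satisfies $\lagl dY,dY\ragl_1=\rho^2\,dx\cdot dx=g$; the same computation shows that an arbitrary positive null lift $\lambda(1,x)$, $\lambda>0$, induces $\lambda^2\,dx\cdot dx$.

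For ($\Leftarrow$), suppose $\td Y=T(Y)$ with $T\in O^+(m+p+1,1)$. Then $T(Y)=\rho\,T(1,x)$ has the same ray as $T(1,x)$, so projecting gives $\td x=\Phi_T\circ x$ and $x,\td x$ are \mo equivalent. For ($\Rightarrow$), suppose $\td x=\Phi\circ x$ with $\Phi$ realized by $T\in O^+(m+p+1,1)$. Then $T(Y)=\rho\,T(1,x)$ projects to $\Phi_T\circ x=\td x$, so it is a positive null lift of $\td x$, say $T(Y)=\mu(1,\td x)$ with $\mu>0$, and because $T$ is a Lorentzian isometry its induced metric equals that of $Y$: $\lagl d(TY),d(TY)\ragl_1=\lagl dY,dY\ragl_1=g$. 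Invoking the invariance of the \mo metric, $g=\td g=\lagl d\td Y,d\td Y\ragl_1$. Now $T(Y)=\mu(1,\td x)$ and $\td Y=\td\rho(1,\td x)$ are two positive null lifts of the same map $\td x$, inducing $\mu^2\,d\td x\cdot d\td x$ and $\td\rho^2\,d\td x\cdot d\td x$; their common equality with $g$ forces $\mu=\td\rho$, hence $T(Y)=\td Y$.

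The step I expect to be the main obstacle is the invariance $g=\td g$ used above, namely that $\rho^2\,dx\cdot dx$, with $\rho^2=\fr m{m-1}(|h|^2-m|H|^2)$, is unchanged under \mo transformations. This is not formal: one must determine how the squared norm $|h|^2-m|H|^2$ of the trace-free second fundamental form transforms under the conformal change of the induced metric effected by $\Phi$, and verify that its scaling precisely cancels that of $dx\cdot dx$. Once this transformation law is in hand, the ray model, the isometry property of $T$, and the uniqueness of the positive null lift combine at once to yield the theorem.
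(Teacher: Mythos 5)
The paper never proves this statement: it is quoted directly from Wang \cite{w}, so your attempt has to be measured against the standard argument there. Your formal skeleton is the right one and matches it. The ($\Leftarrow$) direction by projecting rays is correct and complete; in the ($\Rightarrow$) direction, the observation that $T(Y)$ and $\td Y$ are two positive null lifts of the same immersion $\td x$, inducing the metrics $\mu^2\,d\td x\cdot d\td x$ and $\td\rho^2\,d\td x\cdot d\td x$, so that equality of these metrics forces $\mu=\td\rho$, is a clean and valid reduction (the uniqueness of the lift is immediate since $d\td x\cdot d\td x$ is positive definite).

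The genuine gap is the one you flag yourself but do not close: the \mo invariance $\td g=g$ is exactly the nontrivial content of the forward direction, and everything else in your argument (the ray model, $T$ being a Lorentzian isometry, uniqueness of lifts) is formal. Worse, in the present paper you cannot quote that invariance as a known fact: in Section 2 the invariance of $g$ is \emph{deduced from} Theorem 2.1, i.e.\ from the very statement you are proving, so invoking it without an independent proof makes the argument circular. To be complete you must establish the transformation law directly: writing $T(1,x)=\lambda\,(1,\Phi_T(x))$ with $\lambda=(T(1,x))_0>0$, one has $d\Phi_T(x)\cdot d\Phi_T(x)=\lambda^{-2}\,dx\cdot dx$, and one must check that the trace-free second fundamental form $h-H\,(dx\cdot dx)$, viewed as a normal-bundle-valued covariant $2$-tensor, is unchanged under this ambient conformal rescaling; this yields $|\td h|^2-m|\td H|^2=\lambda^2\left(|h|^2-m|H|^2\right)$, hence $\td\rho=\lambda\rho$. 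Note that once this is in hand you get $\td Y=\td\rho\,(1,\td x)=\rho\lambda\cdot\lambda^{-1}T(1,x)=T(Y)$ directly, so the uniqueness-of-lift step becomes superfluous: the same computation proves both the invariance of $g$ and the theorem, which is precisely how Wang organizes it. As it stands, your proposal is a correct reduction of the theorem to its essential lemma, not yet a proof of the theorem.
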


By Theorem \ref{wth1}, the induced metric
$g=Y^*\lagl\cdot,\cdot\ragl_1=\rho^2dx\cdot dx$ by $Y$ on
$M^m$ from the Lorentzian product $\lagl\cdot,\cdot\ragl_1$ is a \mo
invariant Riemannian metric (cf. \cite{b}, \cite{c}, \cite{w}), and
is called the \mo metric of $x$. Using the vector-valued function
$Y$ and the Laplacian $\Delta$ of the metric $g$, one can define
another important vector-valued function $N:M^m\to\bbr^{m+p+2}_1$, called {\em the \mo biposition vector}, by
\be\label{2.3}
N=-\fr1m\Delta Y-\fr1{2m^2}\lagl\Delta Y,\Delta Y\ragl_1Y.
\ee
Then it is verified that the \mo position vector $Y$ and the \mo
biposition vector $N$ satisfy the following identities \cite{w}:
\begin{align}
&\lagl \Delta Y,Y\ragl_1=-m,\quad \lagl \Delta Y,dY\ragl_1=0,\quad
\lagl\Delta Y,\Delta Y\ragl_1=1+m^2\kappa, \label{2.4}\\
&\lagl Y,Y\ragl_1=\lagl N,N\ragl_1=0,\quad \lagl Y,N\ragl_1=1,
\label{2.5}
\end{align}
where $\kappa$ denotes the normalized scalar curvature of the \mo
metric $g$.

Let $V\to M^m$ be the vector subbundle of the trivial Lorentzian
bundle $M^m\times\bbr^{m+p+2}_1$ defined to be the orthogonal
complement of $\bbr Y\oplus \bbr N\oplus Y_*(TM^m)$ with respect to
the Lorentzian product $\lagl\cdot,\cdot\ragl_1$. Then $V$ is called
the \mo normal bundle of the immersion $x$. Clearly, we have the
following vector bundle decomposition:
\be\label{2.6}
M^m\times\bbr^{m+p+2}_1=\bbr Y\oplus \bbr N\oplus Y_*(TM^m)\oplus
V.
\ee

Now, let $T^\bot M^m$ be the normal bundle of the immersion
$x:M^m\to \bbs^{m+p}$. Then the mean curvature vector field $H$ of $x$
defines a bundle isomorphism $\Phi:T^\bot M^m\to V$ by
\be\label{2.7}
\Phi(e)=\left(H\cdot e,(H\cdot e)x+e\right)\quad\text{for any } e\in
T^\bot M^m.
\ee
It is known that $\Phi$ preserves the inner products as well as
the connections on $T^\bot M^m$ and $V$ (\cite{w}).

To simplify notations, we make the following conventions on the
ranges of indices used frequently in this paper:
\be\label{2.8}
1\leq i,j,k,\cdots\leq m,\quad m+1\leq
\alpha,\beta,\gamma,\cdots\leq m+p.
\ee

For a local orthonormal  frame field $\{ e_i\}$ for the induced
metric $dx\cdot dx$ with dual $\{\theta^i\}$ and for an orthonormal
normal frame field $\{ e_\alpha\}$ of $x$, we set
\be\label{2.9}
E_i=\rho^{-1}e_i,\quad \omega^i=\rho\theta^i,\quad
E_\alpha=\Phi(e_\alpha).
\ee
Then $\{E_i\}$ is a local orthonormal frame field on $M^m$ with respect to the \mo metric
$g$, $\{\omega^i\}$ is the dual of $\{E_i\}$, and $\{E_\alpha\}$ is
a local orthonormal frame field of the \mo normal bundle $V\to M$. Clearly,
$\{Y,N,Y_i:=Y_*(E_i),E_\alpha\}$ is a moving frame of $\bbr^{m+p+2}_1$ along $M^m$. If the basic \mo invariants $A$, $B$ and $C$ are respectively written as
\be\label{2.10}
A=\sum
A_{ij}\omega^i\omega^j,\quad  B=\sum B^\alpha_{ij}\omega^i\omega^j
E_\alpha,\quad C=\sum  C^\alpha_i\omega^i E_\alpha,
\ee
then we have the following equations of motion (\cite{w}):
\begin{align}
dY=&\sum Y_i\omega^i,\quad dN=\sum A_{ij}\omega^jY_i+C^\alpha_i\omega^iE_\alpha,\label{2-6}\\
dY_i=&-\sum A_{ij}\omega^jY-\omega^iN+\sum \omega^j_iY_j+\sum B^\alpha_{ij}\omega^jE_\alpha,\label{2-7}\\
dE_\alpha=&-\sum C^\alpha_i\omega^iY-\sum B^\alpha_{ij}\omega^jY_i+\sum \omega^\beta_\alpha E_\beta,\label{2-8}
\end{align}
where $\omega^j_i$ are the Levi-Civita connection forms of the \mo metric $g$ and $\omega^\beta_\alpha$ are the (\mo) normal connection forms of $x$. Furthermore,
by a direct computation one can find the following local expressions (\cite{w}):
\begin{align}
A_{ij}=&- \rho^{-2}\left(\hess_{ij}(\log \rho)- e_i(\log \rho)
e_j(\log \rho)
-\sum  H^\alpha h^\alpha_{ij}\right) \nnm\\
&-\fr12\rho^{-2}\left(|d\log \rho|^2-1+| H|^2\right)\delta_{ij},\label{2.12}\\
B^\alpha_{ij}=&\ \rho^{-1}\left( h^\alpha_{ij}-
H^\alpha\delta_{ij}\right), \label{2.13}\\
C^\alpha_i=&-\rho^{-2}\left(H^\alpha_{,i}+\sum(
h^\alpha_{ij}-H^\alpha\delta_{ij})
e_j(\log\rho)\right), \label{2.11}
\end{align}
in which the subscript ``$,i$'' denotes the covariant derivative
with respect to the induced metric $d x\cdot d x$ and in the
direction $e_i$.

\begin{rmk}\label{rmk2}\rm For an umbilic-free immersion $\bar x:M^m\to\bbr^{m+p}$ (resp. $\bar x:M^m\to\bbh^{m+p}$), a \mo factor $\bar\rho$, a \mo invariant metric $\bar g$ and other \mo invariants
$\bar A,\bar B, \bar C$ are defined similarly. As indicated in \cite{w} and \cite{lwz}, while the
corresponding components $\bar B_{ij}$ (resp. $\bar C^\alpha_i$) of $\bar B$ (resp. $\bar C$) have the same expressions as \eqref{2.13} (resp. \eqref{2.11}), the components $\bar A_{ij}$ of $\bar A$ has a
slightly different expression from \eqref{2.12}:
\begin{align}
\bar A_{ij}=&- \bar\rho^{-2}\left(\hess_{ij}(\log \bar\rho)-
e_i(\log \bar\rho) e_j(\log \bar\rho)
-\sum \bar H^\alpha \bar h^\alpha_{ij}\right)\nnm\\
&-\fr12\bar\rho^{-2}\left(|d\log \bar\rho|^2+|\bar
H|^2\right)\delta_{ij}\label{2.12'}\\
\mb{(resp.}\hs{2cm} &\nnm\\
\bar A_{ij}=&- \bar\rho^{-2}\left(\hess_{ij}(\log \bar\rho)-
e_i(\log \bar\rho) e_j(\log \bar\rho)
-\sum \bar H^\alpha \bar h^\alpha_{ij}\right)\nnm\\
&-\fr12\bar\rho^{-2}\left(|d\log \bar\rho|^2+1+|\bar
H|^2\right)\delta_{ij}\mb{ )}\label{2.12''}
\end{align}
\end{rmk}

Denote, respectively, by $R_{ijkl}$, $R^\bot_{\alpha\beta ij}$ the components
of the \mo Riemannian curvature tensor and the
curvature operator of the \mo normal bundle
with respect to the tangent frame field $\{E_i\}$ and the \mo normal frame field $\{E_\alpha\}$. Then we have (\cite{w})
\begin{align} \tr
A&=\fr1{2m}(1+m^2\kappa),\quad\tr B=\sum
B^\alpha_{ii}E_\alpha=0,\quad |B|^2=\sum
(B^\alpha_{ij})^2=\fr{m-1}{m}. \label{2.14}
\\
&R_{ijkl}=\sum(B^\alpha_{il}B^\alpha_{jk}
-B^\alpha_{ik}B^\alpha_{jl})+A_{il}\delta_{jk}-A_{ik}\delta_{jl}
+A_{jk}\delta_{il}-A_{jl}\delta_{ik}. \label{2.15}\\
&\hs{2.6cm}R^\bot_{\alpha\beta ij}=\sum(B^\alpha_{jk}B^\beta_{ik}-B^\alpha_{ik}B^\beta_{jk}
).\label{2-16}
\end{align}

We should remark that both equations \eqref{2.15} and \eqref{2-16} have the opposite sign from those in \cite{w} due to the different notations
of the Riemannian curvature tensor. Furthermore, let
$A_{ijk}$, $B^\alpha_{ijk}$ and $C^\alpha_{ij}$ denote, respectively,
the components with respect to the frame fields $\{E_i\}$ and
$\{E_\alpha\}$ of the covariant derivatives of $A$, $B$ and $C$, then the following Ricci identities hold (\cite{w}):
\begin{align}
A_{ijk}-A_{ikj}=&\sum
(B^\alpha_{ik}C^\alpha_j-B^\alpha_{ij}C^\alpha_k), \label{2.17}\\
B^\alpha_{ijk}-B^\alpha_{ikj}=&\delta_{ij}C^\alpha_k
-\delta_{ik}C^\alpha_j, \label{2.18}\\
C^\alpha_{ij}-C^\alpha_{ji}
=&\sum(B^\alpha_{ik}A_{kj}-B^\alpha_{kj}A_{ki}).  \label{2.16}
\end{align}

Denote by $R_{ij}$ the components of the Ricci curvature. Then
by taking trace in \eqref{2.15} and \eqref{2.18}, one obtains
\begin{align}
&R_{ij}=-\sum B^\alpha_{ik}B^\alpha_{kj}+\delta_{ij}\tr
A+(m-2)A_{ij}, \label{2.19}\\
&(m-1)C^\alpha_i=-\sum B^\alpha_{ijj}. \label{2.20}
\end{align}

Moreover, for the higher order covariant derivatives $B^\alpha_{ij\cdots kl}$, we have the following Ricci identities:
\be\label{2-21}
B^\alpha_{ij\cdots kl}-B^\alpha_{ij\cdots lk}=\sum B^\alpha_{qj\cdots}R_{iqkl} +\sum B^\alpha_{iq\cdots}R_{jqkl}+\cdots-\sum B^\beta_{ij\cdots}R^\bot_{\beta\alpha kl}.
\ee

By \eqref{2.14}, \eqref{2.19} and \eqref{2.20}, if $m\geq
3$, then the Blaschke tensor $A$ and the \mo form $C$ are
determined by the \mo metric $g$, \mo second fundamental form
$B$ and the (\mo) normal connection of $x$. Thus the following theorem holds:

\begin{thm}[cf. \cite{w}]\label{wth} Two submanifolds $x:M^m\to
\bbs^{m+p}$ and $\td x:\td M^m\to \bbs^{m+p}$, $m\geq 3$, are \mo
equivalent if and only if they have the same \mo metrics, the same \mo second fundamental forms and the same (\mo) normal connections.
\end{thm}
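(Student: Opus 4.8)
The plan is to treat this as a congruence (rigidity) theorem in the spirit of the fundamental theorem of submanifold theory, established by the method of moving frames. The ``only if'' direction is immediate from the invariance built into the setup: if $x$ and $\td x$ are \mo equivalent, then by Theorem \ref{wth1} there is a $T\in O^+(m+p+1,1)$ with $\td Y=T(Y)$; since $T$ is a linear isometry of $\lagl\cdot,\cdot\ragl_1$, it carries the moving frame $\{Y,N,Y_i,E_\alpha\}$ to $\{\td Y,\td N,\td Y_i,\td E_\alpha\}$ and hence preserves the \mo metric $g$ together with all of the quantities $A$, $B$, $C$ and the normal connection defined through the structure equations \eqref{2-6}--\eqref{2-8}. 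The substance of the theorem is the converse, so I would concentrate on sufficiency, identifying $\td M^m$ with $M^m$ by the assumed isometry of the \mo metrics.

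First I would show that, under the hypothesis $m\geq 3$, the Blaschke tensor $A$ and the \mo form $C$ are already forced by the data $(g,B,\text{normal connection})$, so that $x$ and $\td x$ automatically share the same $A$ and the same $C$. Indeed, the normalized scalar curvature $\kappa$ of $g$ determines $\tr A$ through \eqref{2.14}; substituting this into the traced Gauss equation \eqref{2.19}, $R_{ij}=-\sum B^\alpha_{ik}B^\alpha_{kj}+\delta_{ij}\tr A+(m-2)A_{ij}$, one solves for $A_{ij}$ precisely because the coefficient $m-2$ is nonzero when $m\geq 3$; here $R_{ij}$ is intrinsic to $g$ and the remaining terms are built from $B$. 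Likewise \eqref{2.20}, $(m-1)C^\alpha_i=-\sum B^\alpha_{ijj}$, recovers $C$ from the covariant derivative of $B$ taken with respect to $g$ and the given normal connection. Thus all four \mo invariants, and hence all coefficients appearing in \eqref{2-6}--\eqref{2-8}, coincide for $x$ and $\td x$.

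Next I would assemble the frame $F=(Y,N,Y_1,\dots,Y_m,E_{m+1},\dots,E_{m+p})$ into a single matrix-valued map and read \eqref{2-6}--\eqref{2-8} as a linear total differential (Pfaffian) system $dF=F\,\Omega$, whose coefficient matrix $\Omega$ of $1$-forms is built entirely from $A_{ij},B^\alpha_{ij},C^\alpha_i,\omega^i,\omega^j_i,\omega^\beta_\alpha$ and is therefore identical for the two immersions. Fixing a base point $p_0$, the relations \eqref{2.5} together with the orthonormality of $\{Y_i,E_\alpha\}$ show that $F(p_0)$ and $\td F(p_0)$ are two bases of $\bbr^{m+p+2}_1$ with the same Gram matrix relative to $\lagl\cdot,\cdot\ragl_1$; hence there is a unique linear $T$ with $T(F(p_0))=\td F(p_0)$, and $T\in O(m+p+1,1)$. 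Because $Y=\rho(1,x)$ and $\td Y$ both have positive first coordinate, $T$ preserves $C^{m+p+1}_+$, so in fact $T\in O^+(m+p+1,1)$ by \eqref{2.2}. Since $T$ is constant, $T\cdot F$ satisfies the same system $d(TF)=(TF)\,\Omega$ as $\td F$ and agrees with it at $p_0$; the system is completely integrable, its integrability conditions being precisely the structure equations, which hold because $x,\td x$ are genuine immersions. By uniqueness of solutions of a completely integrable linear Pfaffian system on the connected manifold $M^m$, we conclude $T\cdot F\equiv\td F$; in particular $\td Y=T(Y)$ with $T\in O^+(m+p+1,1)$, and Theorem \ref{wth1} yields the \mo equivalence of $x$ and $\td x$.

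I expect the main obstacle to be organizational rather than deep: one must verify that ``same $g$, $B$, and normal connection'' genuinely produces the same coefficient matrix $\Omega$, confirm that the initial isometry $T$ lands in the correct component $O^+(m+p+1,1)$, and invoke the uniqueness theorem for completely integrable Pfaffian systems in the appropriate form. The essential analytic point -- the only place where $m\geq 3$ is used -- is the inversion of \eqref{2.19} to recover $A$; for $m=2$ the factor $m-2$ degenerates, $A$ is no longer determined by the remaining data, and the argument (and the theorem) fails, which is why the hypothesis $m\geq 3$ cannot be dropped.
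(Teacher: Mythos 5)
Your proposal is correct and is essentially the paper's own argument: the paper proves this theorem precisely by your first observation, namely that for $m\geq 3$ the identities \eqref{2.14}, \eqref{2.19} and \eqref{2.20} determine $A$ and $C$ from $g$, $B$ and the normal connection (the coefficient $m-2$ being nonzero), after which it appeals to the congruence theorem of \cite{w}. Your moving-frame/Pfaffian-system argument correctly supplies a self-contained proof of that cited congruence step (together with the membership $T\in O^+(m+p+1,1)$ via Theorem \ref{wth1}), so the two routes coincide except that you prove, rather than quote, Wang's uniqueness theorem.
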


\section{The new examples}\label{sec3}

Before proving the main theorem, we need to find as many as possible examples of submanifolds in the unit sphere $\bbs^{m+p}$ with parallel Blaschke tensors and with two distinct Blaschke eigenvalues. First we note that, by Zhai-Hu-Wang (\cite{zhw}), all \mo parallel submanifolds in $\bbs^{m+1}$ necessarily have parallel Blaschke tensors. Examples of this kind of submanifolds are listed in \cite{zhw}. In this section we define a new class of examples with parallel Blaschke tensors which are in general not \mo parallel.

{\expl\label{expl3.1}\rm Here we are to examine the following three classes of submanifolds that meet the conditions of Theorem \ref{main}.

(1) Let $\td x:M^m\to\bbs^{m+p}$ be an umbilic-free pseudo-parallel submanifolds with parallel mean curvature $\td H$ and constant scalar curvature $\td S$.

Since the mean curvature $\td H$ is parallel (implying that $|\td H|^2=\const$) and the scalar curvature $\td S$ is constant, by the Gauss equation and \eqref{2.1}, we find that the \mo factor $\td\rho$ is also a constant. It follows by \eqref{2.11} that the \mo form $\td C\equiv 0$. Note that, by $\td\rho=\const$, the parallel of tensors with respect to the induced metric $d\td x^2$ and the \mo metric $\td g$ are exactly the same. Consequently, by \eqref{2.12}, the Blaschke tensor $\td A$ of $\td x$ is parallel since $\td x$ is pseudo-parallel.

Clearly, $\td x$ has two distinct Blaschke eigenvalues if and only if it is not minimal and has two distinct principal curvatures in the direction of the mean curvature vector $\td H$. Note that $\td x$ is \mo isotropic, or equivalently, $\td x$ has only one distinct Blaschke eigenvalue, if and only it is minimal (\cite{lwz}).

(2) Let $\bar x:M^m\to\bbr^{m+p}$ be an umbilic-free pseudo-parallel submanifolds with parallel mean curvature $\bar H$ and constant scalar curvature $\bar S$.

As in (1), since the mean curvature $\bar H$ is parallel (in particular $|\bar H|^2=\const$), and the scalar curvature $\bar S$ is constant, we know from \eqref{2.1} and the Gauss equation of $\bar x$ that the \mo factor $\bar\rho$ is once again a constant. Thus, by \eqref{2.11}, the \mo form $\bar C\equiv 0$. Consequently, by \eqref{2.12'}, the Blaschke tensor $\bar A$ of $\bar x$ is parallel since $\bar x$ is pseudo-parallel. Furthermore, $\bar x$ is of two distinct Blaschke eigenvalues if and only if it is not minimal with two distinct principal curvatures in the direction of the mean curvature vector $\bar H$.

Define $\td x:=\sigma\circ\bar x$. Then by \cite{lwz}, $\td x$ has a parallel Blaschke tensor. Furthermore, it is of two distinct Blaschke eigenvalues if and only if $\bar x$ is not minimal with two distinct principal curvatures in the direction of the mean curvature vector.

(3) Let $\bar x:M^m\to\bbh^{m+p}$ be an umbilic-free pseudo-parallel submanifold with parallel mean curvature $\bar H$ and constant scalar curvature $\bar S$. Then, as in (2), we obtain that $\td x:=\tau\circ\bar x$ has a parallel Blaschke tensor; it has two distinct Blaschke eigenvalues if and only if $\bar x$ is not minimal and has two distinct principal curvatures in the direction of the mean curvature vector.

\begin{rmk}\label{rmk3.1}\rm It is not hard to see that the submanifold $\td x$ in (1) is \mo parallel if and only if it is (Euclidean) parallel; the submanifold $\td x$ in (2) is \mo parallel if and only if the corresponding submanifold $\bar x:M^m\to\bbr^{m+p}$ is (Euclidean) parallel; and the submanifold $\td x$ in (3) is \mo parallel if and only if the corresponding submanifold $\bar x:M^m\to\bbh^{m+p}$ is (Euclidean) parallel.\end{rmk}

{\expl\label{expl3.2}\rm Submanifolds ${\rm LS}(m_1,p_1,r,\mu)$ with parameters $m_1,p_1,r,\mu$.

Fix the dimension $m\geq 3$ and the codimension $p\geq 1$. We start with a multiple parameter data $(m_1,p_1,r,\mu)$ where $m_1,p_1$ are integers satisfying
$$1\leq m_1\leq m-1,\quad  0\leq p_1\leq p,$$
and $r>0$, $\mu\in [0,1]$ are real numbers. Denote $m_2:=m-m_1$ and $p_2=p-p_1$.}

Let $\td y=(\td y_0,\td y_1):M_1\to
\bbh^{m_1+p_1}\left(-\fr1{r^2}\right)\subset\bbr^{m_1+p_1+1}_1$ be an immersed
minimal submanifold of dimensional $m_1$ with constant scalar
curvature
\be\label{3.20}\td S_1=-\fr{m_1(m_1-1)}{r^2}-\fr{m-1}{m}\mu,\ee
and
\be\label{3.21}
\td y_2: M_2\to \bbs^{m_2+p_2}(r)\subset\bbr^{m_2+p_2+1}
\ee
be an immersed minimal submanifold of dimension $m_2$
with constant scalar curvature
\be\label{3.1}\td S_2=\fr{m_2(m_2-1)}{r^2}-\fr{m-1}{m}(1-\mu).\ee
Clearly,
\be\label{S1+S2}
\td S_1+\td S_2=\fr{-m_1(m_1-1)+m_2(m_2-1)}{r^2}-\fr{m-1}{m}.
\ee

Set
\be\label{3.22}
\td M^m=M_1\times M_2,\quad\td Y=(\td y_0,\td y_1,\td
y_2).
\ee
Then $\td Y:\td M^m\to\bbr^{m+p+2}_1$ is an immersion satisfying $\lagl
\td Y,\td Y\ragl_1=0$ and has the induced Riemannian metric
$$g=\lagl d\td Y,d\td Y\ragl_1=-d\td y_0^2+d\td y^2_1+d\td y^2_2.$$
Obviously, as a Riemannian
manifold, we have
\be\label{3.23}
(\td M^m,g)=(M_1,\lagl d\td y,d\td y\ragl_1)\times
\left(M_2,d\td y^2_2\right).\ee
Define
\be\label{3.24}
\td x_1=\fr{\td y_1}{\td y_0},\quad \td
x_2=\fr{\td y_2}{\td y_0}, \quad \td x=(\td x_1,\td x_2).
\ee
Then $\td x^2=1$ and $\td x:\td M^m\to \bbs^{m+p}$ is an immersed submanifold which we denote by ${\rm LS}(m_1,p_1,r,\mu)$ for simplicity. Since
\be\label{3.25}
d\td x=-\fr{d\td y_0}{\td y^2_0}(\td y_1,\td y_2) +\fr1{\td
y_0}(d\td y_1,d\td y_2), \ee
the induced metric $\td g=d\td x\cdot d\td x$ on $\td M^m$ is related to $g$ by
\be\label{3.26}
\td g=\td y^{-2}_0(-d\td y^2_0+d\td y^2_1+d\td y^2_2)=\td
y^{-2}_0g.
\ee

Let $$\{\bar e_\alpha; m+1\leq\alpha\leq m+p_1\}\ \mb{ (resp. }\ \{\bar e_\alpha; m+p_1+1\leq\alpha\leq m+p\})$$ be an orthonormal normal frame field of $\td y$ (resp. $\td y_2$) with
$$\bar e_\alpha=(\bar e_{\alpha 0},\bar e_{\alpha 1})\in\bbr^1_1\times\bbr^{m_1+p_1}\equiv\bbr^{m_1+p_1+1}_1, \mb{ for }\alpha=m+1,\cdots,m+p_1.$$
Define
\begin{align}
\td e_\alpha=&(\bar e_{\alpha 1},0)-\bar e_{\alpha 0} \td x\in\bbr^{m_1+p_1}\times\bbr^{m_2+p_2+1}\equiv\bbr^{m+p+1},\ \mb{for }\alpha=m+1,\cdots,m+p_1;\label{3-10}\\
\td e_\alpha=&(0,\bar e_\alpha)\in\bbr^{m_1+p_1}\times\bbr^{m_2+p_2+1}\equiv\bbr^{m+p+1},\ \mb{for }\alpha=m+p_1+1,\cdots,m+p.\label{3-11}
\end{align}
Then $\{\td e_\alpha;\ m+1\leq\alpha\leq m+p\}$ is an orthonormal normal frame field of ${\rm LS}(m_1,p_1,r,\mu)$.

Hence, by \eqref{3.25}, for $\alpha=m+1,\cdots,m+p_1$
\begin{align} d \td e_\alpha\cdot d\td x=&\ (d\bar e_{\alpha1},0)\cdot d\td x-d\bar e_{\alpha 0}\td x d\td x-\bar e_{\alpha 0}d\td x^2\nnm\\
=&\ \td y^{-1}_0(-d\bar e_{\alpha 0}d\td y_0+d\bar e_{\alpha 1}\cdot d\td y_1)-\bar e_{\alpha 0}\td
y^{-2}_0g;\label{3.27}
\end{align}
while for $\alpha=m+p_1+1,\cdots,m+p$,
$$
d \td e_\alpha\cdot d\td x=\td y^{-1}_0(d\bar e_\alpha\cdot d\td y_2).
$$
Consequently, if we denote by
$$
\bar h_{M_1}=\sum_{\alpha=m+1}^{m+p_1}\bar h^\alpha \bar e_\alpha,\quad \bar h_{M_2}=\sum_{\alpha=m+p_1+1}^{m+p}\bar h^\alpha \bar e_\alpha
$$
the second fundamental forms of $\td y$ and $\td y_2$, respectively, then the second fundamental form
$$\td h=\sum_{\alpha=m+1}^{m+p}\td h^\alpha \td e_\alpha$$ of ${\rm LS}(m_1,p_1,r,\mu)$ is given in terms of
$\bar h_{M_1}$, $\bar h_{M_2}$ and the metric $g$ as follows:
\begin{align}
\td h^\alpha=&-d \td e_\alpha\cdot d\td x=y^{-1}_0\bar h^\alpha+\bar e_{\alpha 0}\td y^{-2}_0g, \quad \mb{for }\alpha=m+1,\cdots,m+p_1;\label{3.29}\\
\td h^\alpha=&-d \td e_\alpha\cdot d\td x=\td
y^{-1}_0\bar h^\alpha,\quad \mb{for }\alpha=m+p_1+1,\cdots,m+p.\label{3.29-1}
\end{align}

Let $$\{E_i\,;1\leq i\leq m_1\}\ \mb{ (resp. }\ \{E_i\,;m_1+1\leq i\leq
m\})$$
be a local orthonormal frame field for $(M_1,\lagl d\td y,d\td y\ragl_1)$ (resp.
for $(M_2,d\td y^2_2)$). Then $\{E_i\,;1\leq i\leq m\}$ is a local
orthonormal frame field for $(M^m,g)$. Put $\td e_i=\td y_0E_i$,
$i=1,\cdots,m$. Then $\{\td e_i\,;1\leq i\leq m\}$ is a local
orthonormal frame field for $(M^m,\td g)$. Thus for $\alpha=m+1,\cdots,m+p_1$,
\be\label{3.30}
\left\{\aligned
\td h^\alpha_{ij}=&\ \td h^\alpha(\td e_i,\td e_j)=\td y^2_0\td h^\alpha(E_i,E_j)=\td y_0
\bar h^\alpha(E_i,E_j)+\bar e_{\alpha 0}\,g(E_i,E_j)\\
=&\ \td y_0\bar h^\alpha_{ij}+\bar e_{\alpha 0}\delta_{ij},
\quad \mb{when\ } 1\leq i,j\leq m_1,\\
\td h^\alpha_{ij}=&\ \bar e_{\alpha 0}\delta_{ij},\quad \mb{when\ }i>m_1\ \mb{or\
}j>m_1;
\endaligned\right.
\ee
and for $\alpha=m+p_1+1,\cdots,m+p$,
\be\label{3.9}
\left\{\aligned
\td h^\alpha_{ij}=&\ \td h^\alpha(\td e_i,\td e_j)=\td y^2_0\td h^\alpha(E_i,E_j)=\td y_0
\bar h^\alpha(E_i,E_j)=\td y_0\bar h^\alpha_{ij},\\
&\mb{when\ } m_1+1\leq i,j\leq m,\\
\td h^\alpha_{ij}=&\ 0,\quad \mb{when\ }i\leq m_1\ \mb{or\ }j\leq m_1.
\endaligned\right.
\ee
By using the minimality of both $\td y$ and $\td y_1$, the mean
curvature
$$\td H=\fr1m\sum_{\alpha=m+1}^{m+p}\sum_{i=1}^m\td h^\alpha_{ii}\td e_\alpha$$
of ${\rm LS}(m_1,p_1,r,\mu)$ is given by
\begin{align}
\td H^\alpha=&\fr1m\sum_{i=1}^m\td h^\alpha_{ii}
=\fr{\td y_0}m\sum_{i=1}^{m_1}\bar h^\alpha_{ii}+\bar e_{\alpha 0}
=\bar e_{\alpha 0},\quad\mb{ for }m+1\leq\alpha\leq m+p_1;\label{3.31}\\
\td H^\alpha=&\fr1m\sum_{i=1}^m\td h^\alpha_{ii}
=\fr{\td y_0}m\sum_{i=m_1+1}^{m}\bar h^\alpha_{ii}=0,\quad\mb{ for }m+p_1+1\leq\alpha\leq m+p.\label{3.31-1}
\end{align}
From \eqref{S1+S2}, \eqref{3.30}--\eqref{3.31-1} and the Gauss equations of $\td y$ and $\td y_2$, we find
\begin{align}
|\td h|^2=&\td y^2_0\sum_{\alpha=m+1}^{m+p_1}\sum_{i,j=1}^{m_1}(\bar h^\alpha_{ij})^2+m\sum_{\alpha=m+1}^{m+p_1}(\bar e_{\alpha 0})^2+\td y^2_0\sum_{\alpha=m+p_1+1}^{m+p}\sum_{i,j=m_1+1}^{m}(\bar h^\alpha_{ij})^2\nnm\\
=&\fr{m-1}{m}\td y^2_0+m\sum_{\alpha=m+1}^{m+p_1}(\bar e_{\alpha 0})^2,\\
|\td H|^2=&\sum_{\alpha=m+1}^{m+p_1}(\td H^\alpha)^2 +\sum_{\alpha=m+p_1+1}^{m+p}(\td H^\alpha)^2=\sum_{\alpha=m+1}^{m+p_1}(\bar e_{\alpha 0})^2.
\end{align}
It then follows that
$$|\td h|^2-m|\td
H|^2=\fr{m-1}{m}\td y^2_0>0,$$
implying that $\td x$ is umbilic-free, and the \mo factor $\td \rho=\td y_0$. So $\td Y$ is the \mo position of ${\rm LS}(m_1,p_1,r,\mu)$. Consequently,
the \mo metric of ${\rm LS}(m_1,p_1,r,\mu)$ is nothing but $\lagl d\td Y,d\td Y
\ragl_1=g$. Furthermore, if we denote by $\{\omega^i\}$ the local coframe field on $M^m$ dual to $\{E_i\}$, then the \mo second fundamental form
$$
\td B=\sum_{\alpha=m+1}^{m+p}\td B^\alpha \Phi(\td e_\alpha)\equiv
\sum_{\alpha=m+1}^{m+p}\td B^\alpha_{ij}\omega^i\omega^j \Phi(\td e_\alpha)
$$ of ${\rm LS}(m_1,p_1,r,\mu)$ is given by
\begin{align}
\td B^\alpha=&\td \rho^{-1}\sum(\td h^\alpha_{ij}-\td H^\alpha\delta_{ij})\omega^i\omega^j
=\sum_{i,j=1}^{m_1}\bar h^\alpha_{ij}\omega^i\omega^j,\nnm\\
&\ \mb{ for }\alpha=m+1,\cdots,m+p_1; \label{3.32}\\
\td B^\alpha=&\td \rho^{-1}\sum(\td h^\alpha_{ij}-\td H^\alpha\delta_{ij})\omega^i\omega^j
=\sum_{i,j=m_1+1}^{m}\bar h^\alpha_{ij}\omega^i\omega^j,\nnm\\
&\ \mb{ for }\alpha=m+p_1+1,\cdots,m+p, \label{3.32_1}
\end{align}
or, equivalently
\be\label{B}
\td B^\alpha_{ij}=\begin{cases} \bar h^\alpha_{ij},& \mb{if } m+1\leq\alpha\leq m+p_1,\ 1\leq i,j\leq m_1\\
&\mb{or } m+p_1+1\leq\alpha\leq m+p,\ m_1+1\leq i,j\leq m,\\
0,&\mb{otherwise.}
\end{cases}
\ee

On the other hand, since the \mo metric $g$ is the direct product of $\lagl d\td y,d\td y\ragl_1$ and $d\td y_2\cdot d\td y_2$, one finds by the minimality and the Gauss equations of $\td y$
and $\td y_2$ that the Ricci tensor of $g$ is given as
follows:
\begin{align}
R_{ij}=&-\fr{m_1-1}{r^2}\delta_{ij}-\sum_\alpha\sum_{k=1}^{m_1}\bar h^\alpha_{ik}\bar h^\alpha_{kj},\quad\mb{if\
} 1\leq
i,j\leq m_1, \label{3.33}\\
R_{ij}=&\ \fr{m_2-1}{r^2}\delta_{ij}-\sum_\alpha\sum_{k=m_1+1}^{m}\bar h^\alpha_{ik}\bar h^\alpha_{kj},\quad\mb{if\ } m_1+1\leq
i,j\leq m, \label{3.34}\\
R_{ij}=&\ 0,\quad\mb{otherwise}. \label{3.35}
\end{align}
But by the definitions of $\td y$ and $\td y_2$, the normalized scalar curvature $\kappa$ of $g$ is given by (see \eqref{S1+S2})
\begin{align*}\kappa=&\fr{-m_1(m_1-1)+m_2(m_2-1)}{m(m-1)r^2}-\fr{1}{m^2}.
\end{align*}
Thus
\be\label{3.37}
\tr A=\fr1{2m}(1+m^2\kappa)=\fr{-m_1(m_1-1)+m_2(m_2-1)}{2(m-1)r^2}.
\ee
Since $m\geq 3$, it follows by \eqref{2.19}, \eqref{B}--\eqref{3.35} that the Blaschke tensor of ${\rm LS}(m_1,p_1,r,\mu)$ is given by $A=\sum
A_{ij}\omega^i\omega^j$ where, for $1\leq i,j\leq m_1$,
\begin{align}
A_{ij}=&\fr1{m-2}\left(R_{ij}+\sum_{\alpha,k}\td B^\alpha_{ik}\td B^\alpha_{kj}-\delta_{ij}\tr A\right)\nnm\\
=&-\fr1{2r^2}\delta_{ij};\label{3.38}
\end{align}
and similarly,
\begin{align}
A_{ij}=&\ \fr{1}{2r^2}\delta_{ij},\quad\mb{for } m_1+1\leq
i,j\leq m, \label{3.39}\\
A_{ij}=&\ 0,\quad\mb{otherwise}. \label{3.40}
\end{align}

Therefore, $A$ has two distinct eigenvalues
\be\label{3.41}
\lambda_1=-\lambda_2=-\fr{1}{2r^2}.
\ee
Thus ${\rm LS}(m_1,p_1,r,\mu)$ is of parallel Blaschke tensor $A$ since $\omega^j_i=0$ for $A_{ii}\neq A_{jj}$.

\begin{prop} Submanifolds ${\rm LS}(m_1,p_1,r,\mu)$ defined in Example \ref{expl3.2} are of vanishing \mo form; they are \mo parallel if and only if both
$$\td y:M_1\to \bbh^{m_1+p_1}\left(-\fr1{r^2}\right)\ \mb{ and }\ \td y_2:M_2\to \bbs^{m_2+p_2}(r)$$
are parallel as Riemannian submanifolds. Furthermore, if it is the case, then $x(M_1)$ is isometric to the totally geodesic hyperbolic space $\bbh^{m_1}\left(-\fr1{r^2}\right)$ and $\td y$ can be taken as the standard embedding of $\bbh^{m_1}\left(-\fr1{r^2}\right)$ in $\bbh^{m_1+p_1}\left(-\fr1{r^2}\right)$.
\end{prop}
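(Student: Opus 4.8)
The plan is to prove the three assertions in turn: the vanishing of the \mo form $\td C$, the parallel criterion, and the totally geodesic conclusion for the hyperbolic factor. \emph{Vanishing of $\td C$.} Since the \mo metric $g$ is the Riemannian product \eqref{3.23}, its Laplacian splits over the two factors. Using the minimality of $\td y$ in $\bbh^{m_1+p_1}(-1/r^2)$ and of $\td y_2$ in $\bbs^{m_2+p_2}(r)$, together with the umbilicity of these two space forms inside their flat (Lorentzian, resp. Euclidean) ambient spaces, I would compute
\[
\Delta\td Y=\fr{m_1}{r^2}(\td y_0,\td y_1,0)-\fr{m_2}{r^2}(0,0,\td y_2),
\]
a formula whose consistency is confirmed by $\lagl\Delta\td Y,\td Y\ragl_1=-m$, as required by \eqref{2.4}. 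Substituting into \eqref{2.3} shows that $\td N$ is a linear combination, with constant coefficients, of $(\td y_0,\td y_1,0)$ and $(0,0,\td y_2)$; hence $d\td N$ involves only $d\td y$ and $d\td y_2$, which are tangent to the respective factors. On the other hand, by \eqref{3-10}, \eqref{3-11} and \eqref{2.7} the \mo normal frame takes the form $E_\alpha=(\bar e_\alpha,0)$ for $\alpha\le m+p_1$ and $E_\alpha=(0,\bar e_\alpha)$ for $\alpha>m+p_1$, each $\bar e_\alpha$ being normal to the corresponding factor. Therefore $\lagl d\td N,E_\alpha\ragl_1=0$, and comparison with the equation of motion \eqref{2-6} yields $\td C^\alpha_i=0$, i.e. $\td C\equiv0$.

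\emph{The parallel criterion.} I would again exploit the product structure. Because $g$ is a product metric, the connection forms $\omega^j_i$ mixing the two factors (those with exactly one of $i,j$ in $\{1,\dots,m_1\}$) vanish; and a direct evaluation of $\omega^\beta_\alpha=\lagl dE_\alpha,E_\beta\ragl_1$ from the explicit $E_\alpha$ above shows that the \mo normal connection likewise decouples, reducing on the first $p_1$ indices to the normal connection of $\td y$, on the last $p_2$ indices to that of $\td y_2$, and vanishing on mixed pairs. Feeding this, together with the block form \eqref{B} of $\td B$, into the formula for the covariant derivative $\td B^\alpha_{ijk}$, every cross term drops out, leaving $\td B^\alpha_{ijk}=\bar h^\alpha_{ijk}$ (the van der Waerden--Bortolotti derivative of $\td y$) for $\alpha\le m+p_1$, $i,j,k\le m_1$, the symmetric statement for the second factor, and zero otherwise. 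Hence $\td B$ is parallel if and only if $\bar h$ is parallel for both $\td y$ and $\td y_2$, that is, both factors are parallel Riemannian submanifolds.

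\emph{The totally geodesic conclusion.} Assume $\td x$ is \mo parallel; by the previous step both factors are parallel, so in particular $\td y$ is a \emph{minimal} submanifold of $\bbh^{m_1+p_1}(-1/r^2)$ with parallel second fundamental form. Here I would invoke the classical Simons identity for a minimal submanifold of a space form of constant curvature $c$. Since $\bar h$ is parallel, $|\bar h|^2$ is constant and $\nabla\bar h=0$, so the identity collapses to
\[
0=m_1 c\,|\bar h|^2-\sum_{\alpha,\beta}\big(\tr(A_\alpha A_\beta)\big)^2-\sum_{\alpha,\beta}\|[A_\alpha,A_\beta]\|^2,\qquad c=-\fr1{r^2}<0.
\]
All three terms on the right are non-positive, so each vanishes; the first forces $|\bar h|^2=0$. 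Thus $\td y$ is totally geodesic, and a connected $m_1$-dimensional totally geodesic submanifold of $\bbh^{m_1+p_1}(-1/r^2)$ is an open part of a standard $\bbh^{m_1}(-1/r^2)$, so $\td y$ may be taken to be the standard totally geodesic embedding, as claimed.

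The step I expect to be most delicate is the parallel criterion: one must verify carefully that the \mo normal connection splits as the direct sum of the two factor normal connections and that no mixed components of $\nabla\td B$ survive. By contrast, the sign-definiteness in the Simons identity is the decisive structural point, since it singles out the \emph{hyperbolic} factor as totally geodesic while leaving the spherical factor free to be a nontrivial parallel minimal submanifold.
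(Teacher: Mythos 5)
Your proposal is correct, but it takes a genuinely different route from the paper's in two of its three parts. For the vanishing of the \mo form, the paper never computes the biposition vector: it invokes the trace identity \eqref{2.20}, $(m-1)C^\alpha_i=-\sum_j B^\alpha_{ijj}$, together with the observation that the two blocks of $\td B$ in \eqref{B} are trace-free (by minimality) normal-bundle-valued Codazzi tensors on $M_1$ and $M_2$, so that $\sum_j B^\alpha_{ijj}=\sum_j B^\alpha_{jji}=0$. Your alternative --- computing $\Delta\td Y=\fr{m_1}{r^2}(\td y_0,\td y_1,0)-\fr{m_2}{r^2}(0,0,\td y_2)$ by the Takahashi-type formula, deducing that $\td N$ is a constant-coefficient combination of $(\td y,0)$, $(0,\td y_2)$ and $\td Y$, and reading $\sum_i\td C^\alpha_i\omega^i=\lagl d\td N,E_\alpha\ragl_1=0$ off the structure equation \eqref{2-6} --- is sound; in particular your identification of the \mo normal frame as $\Phi(\td e_\alpha)=(\bar e_\alpha,0)$, resp. $(0,\bar e_\alpha)$, checks out against \eqref{2.7}, \eqref{3.31} and \eqref{3.31-1}. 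It is more explicit, at the cost of a computation that the paper's structural identity lets one skip. Your treatment of the parallel criterion coincides with the paper's (block form \eqref{B} plus the splitting \eqref{3-37} of the normal connection); your insistence on checking that the mixed components of $\nabla\td B$ vanish is exactly what the paper's terse ``easily seen'' conceals, and your verification is right because both the Levi-Civita and the normal connection forms have no cross terms. For the final assertion the paper simply cites Takeuchi's theorem that a connected minimal submanifold with parallel second fundamental form in a space form of non-positive curvature is totally geodesic, whereas you reprove this special case via the Simons (Chern--do Carmo--Kobayashi) identity: with $\nabla\bar h=0$ it collapses to $0=m_1c|\bar h|^2-\sum_{\alpha,\beta}\bigl(\tr(A_\alpha A_\beta)\bigr)^2-\sum_{\alpha,\beta}\|[A_\alpha,A_\beta]\|^2$ with $c=-1/r^2<0$, and the sign-definiteness of the three terms forces $|\bar h|=0$. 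This buys a self-contained argument (no external citation), at the price of invoking or re-deriving the Simons identity; both arguments are valid, and yours correctly explains why only the hyperbolic factor is forced to be totally geodesic while the spherical factor may remain a nontrivial parallel minimal submanifold.
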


\begin{proof} Firstly, if we denote by $$\bar\omega^\beta_\alpha,\quad m+1\leq\alpha,\beta\leq m+p_1\ \mb{ (resp. }\ m+p_1+1\leq\alpha,\beta\leq m+p),$$ the normal connection forms of $\td y$ (resp. $\td y_2$) with respect to the normal frame field
$$\{\bar e_\alpha,\ m+1\leq\alpha\leq m+p_1\}\ \mb{ (resp. } \ \{\bar e_\alpha,\ m+p_1+1\leq\alpha\leq m+p\}),$$
then by definitions \eqref{3-10} and \eqref{3-11} of the normal frame field $\{\td e_\alpha,\ m+1\leq\alpha\leq m+p\}$ for ${\rm LS}(m_1,p_1,r,\mu)$, we easily find that the normal connection forms $\td \omega^\beta_\alpha$, $m+1\leq\alpha,\beta\leq m+p$, with respect to $\{\td e_\alpha,\ m+1\leq\alpha\leq m+p\}$ are as follows:
\be\label{3-37}
\td\omega^\beta_\alpha=\begin{cases} \bar\omega^\beta_\alpha,&\mb{for both } m+1\leq \alpha,\beta\leq m+p_1, \\
&\mb{and } m+p_1+1\leq \alpha,\beta\leq m+p;\\
0, &\mb{otherwise.}
\end{cases}
\ee

On the other hand, note that the \mo metric of ${\rm LS}(m_1,p_1,r,\mu)$ is the direct product of the induced metrics of $\td y$ and $\td y_2$, and the bundle map $\Phi:T^\bot M\to V$ (see Section \ref{sec2}) keeps invariant both the metric and the connections. Therefore the first conclusion comes directly from \eqref{2.20} together with the fact that both
$$\sum_{\alpha=m+1}^{m+p_1} \sum_{i,j=1}^{m_1}B^\alpha_{ij}\omega^i\omega^jE_\alpha \ \mb{ and }\
\sum_{\alpha=m+p_1+1}^{m+p} \sum_{i,j=m_1+1}^{m}B^\alpha_{ij}\omega^i\omega^jE_\alpha$$
are normal-vector-valued Codazzi tensors on $M_1$ and $M_2$, respectively; The second conclusion is
easily seen true by \eqref{B} and \eqref{3-37}; And the third conclusion of the proposition comes from the fact that any connected minimal submanifolds with parallel second fundamental form in a real space form of non-positive curvature must be totally geodesic (\cite{t}).
\end{proof}

\begin{rmk}\rm Clearly, in the case of $p=1$, ${\rm LS}(m_1,p_1,r,\mu)$ will reduce to those hypersurfaces first introduced in \cite{lz06} (See Examples 3.1 and 3.2 there).\end{rmk}

\section{Proof of the main theorem}

Let $x:M^m\to \bbs^{m+p}$ be an umbilic-free submanifold in $\bbs^{m+p}$ satisfying all the conditions in the main theorem, and $\lambda_1,\lambda_2$ be the two different Blaschke eigenvalues of $x$. By the assumption that the \mo form vanishes identically and the Blaschke tensor $A$ is parallel, it is not hard to see that $(M,g)$ can be decomposed into a direct product of two Riemannian manifolds $(M_1,g^{(1)})$ and $(M_2,g^{(2)})$ with $m_1:=\dim M_1$ and $m_2:=\dim M_2$, that is,
$$(M^m,g)=(M_1,g^{(1)})\times(M_2,g^{(2)}),$$
such that, by choosing the orthonormal frame field $\{E_i\}$ of $(M^m,g)$ satisfying
$$E_1,\cdots,E_{m_1}\in TM_1,\ E_{m_1+1},\cdots,E_m\in TM_2,$$
the components $A_{ij}$ of $A$ with respect to $\{E_i\}$ are diagonalized as follows:
\be\label{4-1}
A_{i_1j_1}=\lambda_1\delta_{i_1j_1},\ A_{i_2j_2}=\lambda_2\delta_{i_2j_2},\ A_{i_1j_2}=A_{i_2j_1}=0,
\ee
where and from now on we agree with
$$
1\leq i_1,j_1,k_1,\cdots\leq m_1,\quad m_1+1\leq i_2,j_2,k_2,\cdots\leq m.
$$
Since the \mo form $C\equiv 0$, we can also assume by \eqref{2.16} that the corresponding components $B^\alpha_{ij}$ of the \mo second fundamental form $B$ satisfy
\be\label{4-2}
B^\alpha_{i_1j_2}\equiv 0,\mb{ for all }\alpha,i_1,j_2.
\ee

In general, we have
\begin{lem}\label{lem4.1} It holds that
\be\label{4-2.1}
B^\alpha_{ij\cdots k}\equiv 0,\mb{ if neither }1\leq i,j,\cdots,k\leq m_1\mb{ nor }m_1+1\leq i,j,\cdots,k\leq m.
\ee
where $ij\cdots k$ is a multiple index of order no less than $2$.
\end{lem}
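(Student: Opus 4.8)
The plan is to prove \eqref{4-2.1} by induction on the order $s\geq 2$ of the multi-index, the base case $s=2$ being precisely \eqref{4-2}. Throughout I would keep two reductions in view: a mixed component whose first two slots—those carried by $B$ itself—already straddle the two factors will be killed by expanding a single covariant derivative, whereas a mixed component whose first two slots lie in one factor will first be reshuffled, at no curvature cost, until a cross-factor index is moved into the first three slots.

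Before the induction I would record the structural facts that make this work. Since $C\equiv0$, \eqref{2.18} gives $B^\alpha_{ijk}=B^\alpha_{ikj}$, so $\nabla B$ is totally symmetric in its three indices; by a secondary induction on the number of derivatives, each $B^\alpha_{ijk_1\cdots k_r}$ is then symmetric in its first three slots. Because $(M^m,g)=(M_1,g^{(1)})\times(M_2,g^{(2)})$ is a Riemannian product adapted to $\{E_i\}$, the tangential connection forms mixing the factors vanish, $\omega^{i_2}_{i_1}=\omega^{i_1}_{i_2}=0$, and, using the symmetry $R_{pqab}=R_{abpq}$ together with the fact that the curvature operator of a product annihilates a pair of vectors from different factors, one gets $R_{pqab}=0$ whenever $\{a,b\}$ lie in different factors. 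Finally, feeding \eqref{4-2} into \eqref{2-16} shows $R^\perp_{\alpha\beta ab}=0$ as soon as $\{a,b\}$ straddle the two factors.

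For the inductive step, assume \eqref{4-2.1} for all mixed multi-indices of order $\leq s$, and let $I$ be mixed of order $s+1$. If the first two slots of $I$ form a mixed pair $(i_1,j_2)$, I would write $B^\alpha_{i_1j_2K}$ as the covariant derivative, in the last direction, of $B^\alpha_{i_1j_2K'}$ and expand: the frame-derivative term is the derivative of a component that vanishes identically by the inductive hypothesis; the normal-connection term carries the factor $B^\beta_{i_1j_2K'}=0$; and every tangential correction either pairs a factor-mixing form $\omega^{i_2}_{i_1}=0$ with its coefficient, or replaces a single index by a same-factor index (the only case with $\omega^q_p\neq0$), thereby producing a still-mixed component of order $s$, again zero by hypothesis. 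If instead the first two slots lie in one factor, say $M_1$, then since $I$ is mixed some differentiation index lies in $M_2$; I would take the leftmost such index and move it past the intervening same-factor indices by successive applications of \eqref{2-21}. Each swap interchanges two differentiation directions from different factors, so the resulting curvature contributions involve $R_{\cdot\cdot ab}$ and $R^\perp_{\cdot\cdot ab}$ with $\{a,b\}$ straddling the factors; by the structural facts these vanish, so the swaps are exact. Once the cross-factor index sits among the first three slots, the first-three symmetry lets me transpose it into the second slot, yielding a mixed first pair, which is the case already settled. This closes the induction.

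The main obstacle I expect is the bookkeeping in the swaps: one must check that every curvature contribution produced by \eqref{2-21} really has the two interchanged directions occupying the last two curvature slots, so that the vanishing of $R_{\cdot\cdot ab}$ and $R^\perp_{\cdot\cdot ab}$ for straddling $\{a,b\}$ applies, and that advancing the cross-factor index only ever requires interchanging it with same-factor neighbours, so that no swap is ever internal to a single factor and uncontrolled curvature terms never arise. Establishing the product-curvature vanishing $R_{pqab}=0$ for straddling $\{a,b\}$—either by invoking the de Rham splitting behind the product decomposition, or by a direct check from \eqref{2.15}, \eqref{4-1} and \eqref{4-2}—is the one genuinely geometric input; the remainder is the symmetry-and-connection argument above.
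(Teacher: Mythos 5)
Your overall toolkit---induction on the order, killing a covariant derivative term-by-term once enough of the multi-index is mixed (using $\omega^{i_2}_{i_1}=0$), and Ricci-identity swaps whose curvature terms vanish because mixed components of $R$ and $R^\perp$ vanish---is the same as the paper's, and your first case is essentially the paper's Case (i) run under a stronger hypothesis. The genuine problem lies in your second case. The identity \eqref{2-21} as stated swaps only the \emph{last} two differentiation indices. When the leftmost cross-factor index sits at position $t$ with further differentiation indices to its right, ``successive applications of \eqref{2-21}'' is not a direct application: you must invoke the Ricci identity at order $t$ (where that pair is last) and then covariantly differentiate the resulting identity $s+1-t$ more times. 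The error terms produced are then not the components $R_{\cdot\cdot ab}$, $R^\perp_{\cdot\cdot ab}$ themselves, but covariant derivatives of the contractions of lower-order derivatives of $B$ with $R$ and $R^\perp$; by the Leibniz rule these contain factors $\nabla^{j}R$ and $\nabla^{j}R^\perp$ evaluated on straddling pairs, and the vanishing of \emph{those} is not among your structural facts. It is in fact true---since the splitting $TM=TM_1\oplus TM_2$ is parallel and the cross-factor connection forms vanish, the mixed components of $\nabla^{j}R$ and $\nabla^{j}R^\perp$ vanish identically, not merely pointwise---but this must be stated and proved; as written, ``so the swaps are exact'' asserts exactly the point at issue.

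The gap is also avoidable, which is where the paper's route is cleaner than yours. Split instead according to whether the \emph{prefix} (all indices but the last) is mixed. If it is, your Case 1 expansion applies verbatim: mixedness of the first two slots was never needed, only mixedness of the prefix, since every connection correction replaces one index by a same-factor index and so preserves mixedness. If the prefix is unmixed, the cross-factor index is necessarily the last one, and a \emph{single}, direct application of \eqref{2-21} to the last two indices---whose curvature terms involve only undifferentiated $R_{\cdot\cdot kl}$ and $R^\perp_{\cdot\cdot kl}$ with $(k,l)$ straddling, hence vanish by your structural facts---moves it into the prefix and reduces to the first case. This is the paper's argument; it eliminates both the mid-index swaps and your appeal to the first-three-slot symmetry coming from \eqref{2.18}, which the paper never needs.
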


\begin{proof}
Due to \eqref{4-2} and the method of induction, it suffices to prove that if \eqref{4-2.1} holds then
\be\label{4-2.2}
B^\alpha_{ij\cdots kl}\equiv 0,\mb{ if neither }1\leq i,j,\cdots,k,l\leq m_1\mb{ nor }m_1+1\leq i,j,\cdots,k,l\leq m.
\ee
In fact, we only need to consider the following two cases:

(i) Neither $1\leq i,j,\cdots,k\leq m_1$ nor $m_1+1\leq i,j,\cdots,k\leq m$.

In this case we use \eqref{4-2.1} and $\omega^{j_2}_{i_1}=0$ to find
$$
B^\alpha_{ij\cdots kl}\omega^l=dB^\alpha_{ij\cdots k}-\sum B^\alpha_{lj\cdots k}\omega^l_i
-\sum B^\alpha_{il\cdots k}\omega^l_j-\cdots -\sum B^\alpha_{ij\cdots l}\omega^l_k
+\sum B^\beta_{ij\cdots k}\omega^\alpha_\beta\\
\equiv 0.
$$
So \eqref{4-2.2} is true.

(ii) $Either 1\leq i,j,\cdots,k\leq m_1$ or $m_1+1\leq i,j,\cdots,k\leq m$.

Without loss of generality, we assume the first. Then it must be that $m_1+1\leq l\leq m$.
Note that by \eqref{2-16} and \eqref{4-2},
\be\label{4-2.3}
R^\bot_{\alpha\beta i_1j_2}=\sum(B^\alpha_{j_2q}B^\beta_{i_1q} -B^\alpha_{i_1q}B^\beta_{j_2q})\equiv 0,\quad\forall i_1,j_2. \ee
This together with Case (i), the Ricci identities \eqref{2-21} and the fact that $R_{i_1j_2ij}\equiv 0$ shows that
$$
B^\alpha_{ij\cdots kl}=B^\alpha_{ij\cdots lk}+\sum B^\alpha_{qj\cdots}R_{iqkl} +\sum B^\alpha_{iq\cdots}R_{jqkl}+\cdots-\sum B^\beta_{ij\cdots}R^\bot_{\beta\alpha kl}.
\equiv 0.
$$
\end{proof}

\begin{lem}\label{lem4-1}It holds that, for all $i_1,j_1,k_1,\cdots,l_1$ and $i_2,j_2,\cdots,k_2$,
\begin{align}
&\sum_\alpha B^\alpha_{i_1j_1}B^\alpha_{i_2j_2} =-(\lambda_1+\lambda_2)\delta_{i_1j_1}\delta_{i_2j_2},\label{4-3}\\
&\sum_\alpha B^\alpha_{i_1j_1k_1}B^\alpha_{i_2j_2} =0,\quad \sum_\alpha B^\alpha_{i_1j_1}B^\alpha_{i_2j_2k_2} =0.\label{4-4}
\end{align}
More generally,
\be\label{4-5}
B^\alpha_{i_1j_1k_1\cdots l_1}B^\alpha_{i_2j_2\cdots k_2} =0,\quad
B^\alpha_{i_1j_1\cdots k_1}B^\alpha_{i_2j_2k_2\cdots l_2} =0,
\ee
where $i_1j_1k_1\cdots l_1$ and $i_2j_2k_2\cdots l_2$ are multiple indices of order no less than $3$.
\end{lem}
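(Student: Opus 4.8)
The plan is to read off \eqref{4-3} from the Gauss equation \eqref{2.15} applied to a suitable mixed curvature component, and then to bootstrap \eqref{4-4} and \eqref{4-5} by repeated covariant differentiation, using Lemma \ref{lem4.1} to discard at each stage every ``mixed'' derivative that is produced.

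First I would establish \eqref{4-3}. Since $(M^m,g)=(M_1,g^{(1)})\times(M_2,g^{(2)})$ is a Riemannian product adapted to the index split, every curvature component with indices drawn from both factors vanishes; in particular $R_{i_1i_2j_1j_2}=0$. Substituting $i=i_1$, $j=i_2$, $k=j_1$, $l=j_2$ into \eqref{2.15} and simplifying with \eqref{4-1} (which makes $A$ diagonal, $A_{i_1j_1}=\lambda_1\delta_{i_1j_1}$, $A_{i_2j_2}=\lambda_2\delta_{i_2j_2}$, all cross terms zero) and \eqref{4-2} (which kills the mixed terms $B^\alpha_{i_1j_2}$, $B^\alpha_{i_2j_1}$), the Gauss equation collapses to
$$0=R_{i_1i_2j_1j_2}=-\sum_\alpha B^\alpha_{i_1j_1}B^\alpha_{i_2j_2}-(\lambda_1+\lambda_2)\delta_{i_1j_1}\delta_{i_2j_2},$$
which is exactly \eqref{4-3}.

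Next, since $A$ is parallel its eigenvalues $\lambda_1,\lambda_2$ are constant, so the right-hand side of \eqref{4-3} is a parallel tensor. Differentiating \eqref{4-3} covariantly and using the Leibniz rule over the contracted index $\alpha$ (legitimate because the normal connection is metric-compatible) gives $\sum_\alpha(B^\alpha_{i_1j_1k}B^\alpha_{i_2j_2}+B^\alpha_{i_1j_1}B^\alpha_{i_2j_2k})=0$ for every direction $E_k$. Taking $k=k_1$ in the first block forces $B^\alpha_{i_2j_2k_1}\equiv0$ by Lemma \ref{lem4.1}, leaving the first identity of \eqref{4-4}; taking $k=k_2$ in the second block forces $B^\alpha_{i_1j_1k_2}\equiv0$, leaving the second.

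Finally, \eqref{4-5} follows by induction on the total order $|I_1|+|I_2|$ of the two blocks of indices, with \eqref{4-3} as base case and the combined inductive claim that $\sum_\alpha B^\alpha_{I_1}B^\alpha_{I_2}$ equals $-(\lambda_1+\lambda_2)\delta_{I_1}\delta_{I_2}$ when both blocks have order $2$ and vanishes otherwise. In the inductive step I strip the last index off whichever block has order $\geq 3$, apply the inductive hypothesis (whose right-hand side is either $0$ or a parallel multiple of products of $\delta$'s, hence parallel), and differentiate in that same block's direction; the one surviving cross term is again a mixed covariant derivative of $B$, which Lemma \ref{lem4.1} annihilates, yielding the desired product $\equiv0$. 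The only points demanding care are the bookkeeping of which differentiated factor becomes ``mixed'' (and hence vanishes) and the justification that differentiating the $\alpha$-contracted product obeys the tangential Leibniz rule; neither is a genuine obstacle, so the whole of \eqref{4-4}--\eqref{4-5} is a disciplined iteration of the single differentiation used for \eqref{4-4}.
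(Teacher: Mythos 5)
Your proof is correct and follows essentially the same route as the paper: \eqref{4-3} is obtained by evaluating the \mo Gauss equation \eqref{2.15} at mixed indices and using \eqref{4-1}, \eqref{4-2} together with the vanishing of the mixed curvature components of the product metric, while \eqref{4-4} and \eqref{4-5} come from covariantly differentiating that relation, invoking the parallelism of $A$ and Lemma \ref{lem4.1} to kill the cross terms, and iterating by induction. This matches the paper's argument, so no changes are needed.
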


\begin{proof} This lemma comes mainly from the \mo Gauss equation \eqref{2.15} and the parallel assumption of the Blaschke tensor $A$. In fact, \eqref{4-3} is given by \eqref{2.15}, \eqref{4-1}, \eqref{4-2} and that $R_{i_1i_2j_2 j_1}\equiv 0$; \eqref{4-4} is given by \eqref{2.15}, \eqref{4-2.1}, $R_{i_1i_2j_2 j_1}\equiv 0$ and the parallel of $A$; Finally, \eqref{4-5} can be shown by the method of induction using Lemma \ref{lem4.1}.
\end{proof}

As the corollary of \eqref{4-3}, we have
\begin{align}
&\sum_\alpha B^\alpha_{i_1j_1}(B^\alpha_{i_2i_2}-B^\alpha_{j_2j_2}) =\sum_\alpha (B^\alpha_{i_1i_1}-B^\alpha_{j_1j_1})B^\alpha_{i_2j_2}=0,\label{4-6}\\
&\sum_\alpha B^\alpha_{i_1j_1}B^\alpha_{i_2j_2}=0,\mb{ if }i_1\neq j_1 \mb{ or }i_2\neq j_2.\label{4-7}
\end{align}

Define
\begin{align}
&V_1=\spn\left\{\sum_\alpha B^\alpha_{i_1j_1\cdots k_1}E_\alpha\right\},\quad V_2=\spn\left\{\sum_\alpha B^\alpha_{i_2j_2\cdots k_2}E_\alpha\right\};\\
&V_{10}=V_1\cap(V_2)^\bot,\quad V_{20}=V_2\cap(V_1)^\bot,\quad\mb{ so that }\quad V_{10}\,\bot\, V_{20}.
\label{4-8}
\end{align}

Let $V'_0$ (resp. $V''_0$) be the orthogonal complement of $V_{10}$ in $V_1$ (resp. $V_{20}$ in $V_2$).

\begin{lem}\label{lem4.2} It holds that $V'_0=V''_0$.\end{lem}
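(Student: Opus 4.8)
The plan is to identify both $V'_0$ and $V''_0$ with one and the same explicit line in the \mo normal space, using the trace-free condition on $B$ together with the orthogonality relations already assembled. Throughout, write $\xi_{i_1}=\sum_\alpha B^\alpha_{i_1i_1}E_\alpha$ and $\eta_{i_2}=\sum_\alpha B^\alpha_{i_2i_2}E_\alpha$ for the diagonal order-two generators of $V_1$ and $V_2$, and let $W_1\subset V_1$ (resp. $W_2\subset V_2$) be the span of all the remaining generators, namely the off-diagonal order-two ones and those of order $\geq 3$. By \eqref{4-4}, \eqref{4-5} and \eqref{4-7} every vector in $W_1$ is orthogonal to all of $V_2$ and every vector in $W_2$ is orthogonal to all of $V_1$; hence $W_1\subset V_{10}$, $W_2\subset V_{20}$, and the only cross-block pairing that can fail to vanish is $\lagl\xi_{i_1},\eta_{i_2}\ragl=-(\lambda_1+\lambda_2)$, supplied by \eqref{4-3}. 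Also $V_1=\spn\{\xi_{i_1}\}+W_1$ and $V_2=\spn\{\eta_{i_2}\}+W_2$.

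First I would produce a single vector lying in both $V'_0$ and $V''_0$. Since $\tr B=\sum_\alpha B^\alpha_{ii}E_\alpha=0$ by \eqref{2.14}, splitting the sum over the two index blocks gives $\sum_{i_1}\xi_{i_1}=-\sum_{i_2}\eta_{i_2}=:S$. The left side lies in $V_1$ and the right side in $V_2$, so $S\in V_1\cap V_2$. Because $V_{10}=V_1\cap(V_2)^\bot$ and $S\in V_2$, we have $S\,\bot\,V_{10}$, whence $S\in V'_0$; symmetrically $S\in V_2$ together with $S\in V_1$ forces $S\,\bot\,V_{20}$, so $S\in V''_0$. Thus $S\in V'_0\cap V''_0$.

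Next I would bound the two dimensions. From \eqref{4-6} one gets $\lagl\xi_{i_1}-\xi_{j_1},\eta_{i_2}\ragl=0$, while $\xi_{i_1}-\xi_{j_1}\in V_1$ is automatically orthogonal to $W_2\subset(V_1)^\bot$; since $V_2=\spn\{\eta_{i_2}\}+W_2$, this shows $\xi_{i_1}-\xi_{j_1}\in (V_2)^\bot\cap V_1=V_{10}$. Combined with $W_1\subset V_{10}$ and $V_1=\spn\{\xi_{i_1}\}+W_1$, this means that, modulo $V_{10}$, all generators of $V_1$ collapse to the single class $[\xi_1]$; as $V_1=V_{10}\oplus V'_0$ orthogonally, $\dim V'_0=\dim(V_1/V_{10})\leq 1$, and the symmetric argument gives $\dim V''_0\leq 1$.

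Finally I would conclude by cases. If $\lambda_1+\lambda_2=0$, then every cross-block pairing vanishes, so $V_1\,\bot\,V_2$, $V_{10}=V_1$, $V_{20}=V_2$, and $V'_0=V''_0=\{0\}$. If $\lambda_1+\lambda_2\neq 0$, then $\lagl S,\eta_{i_2}\ragl=-m_1(\lambda_1+\lambda_2)\neq 0$, so $S\neq 0$; together with $\dim V'_0,\dim V''_0\leq 1$ and $S\in V'_0\cap V''_0$ this forces $V'_0=\spn\{S\}=V''_0$. I expect the dimension count to be the main obstacle: the relations \eqref{4-3}--\eqref{4-7} a priori only constrain inner products, and the decisive observation is that all the $\xi_{i_1}$ differ from one another by elements of $V_{10}$, so that $V_1$ exceeds $V_{10}$ by at most one dimension. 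The trace-free identity $\tr B=0$ is then what simultaneously furnishes the explicit generator $S$ of that one dimension and forces it to lie in $V_1\cap V_2$, tying the two sides together.
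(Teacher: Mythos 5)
Your proof is correct and takes essentially the same route as the paper: both arguments use the orthogonality relations \eqref{4-3}--\eqref{4-7} to show that $V'_0$ and $V''_0$ are at most one-dimensional and generated by the (mutually equal) diagonal components of $B$, and then invoke the trace-free condition $\tr B=0$ from \eqref{2.14} to identify the two lines. Your common vector $S=\sum_{i_1}B_{i_1i_1}=-\sum_{i_2}B_{i_2i_2}$ is exactly the paper's identity $m_1B^{V'_0}_{i_1i_1}+m_2B^{V''_0}_{j_2j_2}=0$ in disguise, so the explicit quotient count and the case split on $\lambda_1+\lambda_2$ amount to a (slightly more carefully justified) reorganization of the same computation.
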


\begin{proof}
For any $i,j$, we denote by $B^{V'_0}_{ij}$ (resp. $B^{V''_0}_{ij}$) the $V'_0$-component (resp. the $V''_0$-component) of $B_{ij}$. Then, for any $i_1,j_1,i_2,j_2$, it follows from \eqref{4-6} and \eqref{4-7} that
\be\label{4.12}
B^{V'_0}_{i_1i_2}=B^{V''_0}_{j_1j_2}=0,\quad
B^{V'_0}_{i_1i_1}=B^{V'_0}_{j_1j_1},\quad
B^{V''_0}_{i_2i_2}=B^{V''_0}_{j_2j_2}.
\ee
So that
\be\label{4.13}
V'_0=\spn\{B^{V'_0}_{i_1i_1}\},\quad V''_0=\spn\{B^{V''_0}_{j_2j_2}\}.
\ee

On the other hand, by the second equation in \eqref{2.14}, we have
\be\label{4.14}
\sum_{i_1}B_{i_1i_1}+\sum_{j_2}B_{j_2j_2}=0.
\ee
But, for any $i_1,j_2$,
$$
B_{i_1i_1}=B^{V_{10}}_{i_1i_1}+B^{V'_0}_{i_1i_1},\quad
B_{j_2j_2}=B^{V_{20}}_{j_2j_2}+B^{V''_0}_{j_2j_2}.
$$
Since $V_{10}\,\bot V_{20}$, \eqref{4.14} reduces to
\be\label{4.15}
\sum_{i_1}B^{V_{10}}_{i_1i_1} =\sum_{j_2}B^{V_{20}}_{j_2j_2}=0,\quad
\sum_{i_1}B^{V'_0}_{i_1i_1}+\sum_{j_2}B^{V''_0}_{j_2j_2}=0.
\ee
The last equality in \eqref{4.15} together with \eqref{4.12} shows that, for some $i_1,j_2$,
$$m_1B^{V'_0}_{i_1i_1}+m_2B^{V''_0}_{j_2j_2}=0$$
which with \eqref{4.13} proves that $V'_0=V''_0:=V_0$.
\end{proof}

Remark that, by \eqref{4.13}, we have $\dim V_0\leq 1$. Now we need to consider the following two cases:

Case 1. $\lambda_1+\lambda_2\neq 0$.

In this case, by \eqref{4-6} and \eqref{4-7}, it is not hard to see that $\dim V_0=1$ and thus, locally, we can choose an orthonormal normal frame field $\{E_\alpha\}$ such that $V_0=\bbr E_{\alpha_0}$ for some $\alpha_0\in \{m+1,\cdots,m+p\}$. Furthermore, it holds that $\lambda_1+\lambda_2>0$ and
\begin{align}
B^{\alpha_0}_1:=&B^{\alpha_0}_{i_1i_1}=\cdots=B^{\alpha_0}_{j_1j_1} =\pm\sqrt{\fr{m_2}{m_1}(\lambda_1+\lambda_2)},\quad B^{\alpha_0}_{i_1j_1}=0\mb{ for }i_1\neq j_1; \label{4-9}\\ B^{\alpha_0}_2:=&B^{\alpha_0}_{i_2i_2}=\cdots=B^{\alpha_0}_{j_2j_2} =\mp\sqrt{\fr{m_1}{m_2}(\lambda_1+\lambda_2)},\quad B^{\alpha_0}_{i_2j_2}=0\mb{ for }i_2\neq j_2.\label{4-10}
\end{align}
By altering the direction of $E_{\alpha_0}$ if necessary, we can assume without loss of generality that $B^{\alpha_0}_1\geq 0$. Hence we have
\be\label{4-11}
B^{\alpha_0}_1=\sqrt{\fr{m_2}{m_1}(\lambda_1+\lambda_2)}>0,\quad B^{\alpha_0}_2=-\sqrt{\fr{m_1}{m_2}(\lambda_1+\lambda_2)}<0.
\ee

In what follows, we shall agree with the following notation:
\be\label{4-12}
E_{\alpha_1},E_{\beta_1},E_{\gamma_1},\cdots \in V_{10},\quad E_{\alpha_2},E_{\beta_2},E_{\gamma_2},\cdots  \in V_{20}.
\ee
Since, for any $i_1,j_1$ (resp. $i_2,j_2$), the orthogonal projection $B^{V_{20}}_{i_1j_1}$ of $B_{i_1j_1}$ to $V_{20}$ (resp. $B^{V_{10}}_{i_2j_2}$ of $B_{i_2j_2}$ to $V_{10}$) vanishes identically, we have
\be\label{4-13}
B^{\alpha_2}_{i_1j_1}=B^{\alpha_1}_{i_2j_2}=0,\quad \forall i_1,j_1,i_2,j_2.
\ee

The following lemma can be shown by Lemma \ref{lem4-1} and \eqref{4-7} using the method of induction:

\begin{lem}\label{lem4-2} There exist suitably chosen frames $\{E_{\alpha_1}\}$ and $\{E_{\alpha_2}\}$ for $V_{10}$ and $V_{20}$, respectively, such that the \mo normal connection forms $\omega^\beta_\alpha$ with respect to the frame $\{E_\alpha\}$ satisfy
\be\label{4-14}
\omega^{\beta_2}_{\alpha_1}=0,\quad \omega^\beta_{\alpha_0}=0.
\ee
\end{lem}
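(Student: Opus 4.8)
The plan is to read \eqref{4-14} as the assertion that, once we pass to an orthonormal frame adapted to a \emph{parallel} orthogonal splitting of the \mo normal bundle $V$, the connection forms coupling the block $V_{10}$ to the block $V_{20}$, together with the whole connection of the line $V_0=\bbr E_{\alpha_0}$, must vanish. The central step is therefore to prove that $V_1$ and $V_2$ are parallel subbundles of $V$ for the \mo normal connection $\nabla$, i.e.\ the connection whose forms are the $\omega^\beta_\alpha$.

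For this I would use Lemma \ref{lem4.1} directly. If $\xi=\sum_\alpha B^\alpha_{i_1j_1\cdots k_1}E_\alpha$ is a generator of $V_1$, its normal covariant derivative is $\nabla_{E_k}\xi=\sum_\alpha B^\alpha_{i_1j_1\cdots k_1k}E_\alpha$; when $k\le m_1$ this is again a generator of $V_1$, while when $k>m_1$ the multi-index $i_1j_1\cdots k_1k$ is mixed, so Lemma \ref{lem4.1} annihilates every coefficient. Hence $\nabla_{E_k}\xi\in V_1$ for all $k$, and since any section of $V_1$ is a function-combination of such generators, the Leibniz rule makes $V_1$ invariant under $\nabla$, i.e.\ parallel; the same computation applied to $M_2$-type generators shows $V_2$ is parallel. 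Here the product identities \eqref{4-5} and \eqref{4-7} of Lemma \ref{lem4-1} enter indirectly: they are what make the splittings $V_1=V_{10}\oplus V_0$ and $V_2=V_{20}\oplus V_0$ orthogonal with $V_1\cap V_2=V_0$, and force $\dim V_0=1$ in Case 1 through \eqref{4-9}--\eqref{4-11}.

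With $V_1,V_2$ parallel I would choose, on a connected open set, any orthonormal frame adapted to $V=V_{10}\oplus V_0\oplus V_{20}\oplus(V_1+V_2)^\bot$, taking $\{E_{\alpha_1}\}\subset V_{10}$, $\{E_{\alpha_2}\}\subset V_{20}$ and $E_{\alpha_0}$ spanning $V_0$ as in \eqref{4-12}. Both parts of \eqref{4-14} are then immediate. For the first, $E_{\alpha_1}$ is a section of $V_1$, so $\nabla E_{\alpha_1}$ takes values in $V_1$, whereas $E_{\beta_2}\in V_{20}\subseteq V_1^\bot$ by the very definition of $V_{20}$; hence $\omega^{\beta_2}_{\alpha_1}=\lagl\nabla E_{\alpha_1},E_{\beta_2}\ragl_1=0$. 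For the second, $E_{\alpha_0}\in V_0=V_1\cap V_2$ is a section of both parallel bundles, so $\nabla E_{\alpha_0}$ takes values in $V_1\cap V_2=\bbr E_{\alpha_0}$; combined with the skew-symmetry $\omega^{\alpha_0}_{\alpha_0}=0$ of connection forms in an orthonormal frame this yields $\nabla E_{\alpha_0}=0$, that is $\omega^\beta_{\alpha_0}=0$ for every $\beta$.

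The one genuine obstacle is regularity: to speak of $V_1,V_2$ as parallel subbundles I must know that their ranks are locally constant, whereas a priori $\dim V_1(p)$ is only lower semicontinuous. I would settle this by first restricting to the open dense set on which all the relevant ranks are locally constant---there $V_1$ is a smooth subbundle and the computation above shows it is parallel---and then observing that a parallel subbundle is preserved by parallel transport, so its rank is in fact constant on the connected manifold and the adapted frame exists globally. A more computational alternative, closer to the hint, keeps the frames built inductively from the generators and proves \eqref{4-14} entry by entry by induction on the order of covariant differentiation, differentiating the orthogonality relations \eqref{4-5} and \eqref{4-7} and feeding back Lemma \ref{lem4.1}; this trades the single conceptual point for longer but elementary bookkeeping, and the induction is the step I would expect to be the most laborious.
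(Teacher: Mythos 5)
Your argument is sound and reaches the lemma, but it is organized quite differently from the paper, whose entire proof is the single sentence that the lemma ``can be shown by Lemma \ref{lem4-1} and \eqref{4-7} using the method of induction'' --- that is, essentially the component-by-component induction you sketch as your fallback alternative. Your main route reverses the paper's logical order: you first establish that $V_1$ and $V_2$ are parallel subbundles of the \mo normal bundle (via Lemma \ref{lem4.1}, which kills all mixed-index covariant derivatives of $B$), and then both identities of \eqref{4-14} fall out for \emph{any} orthonormal frame adapted to $V_{10}\oplus V_0\oplus V_{20}\oplus (V_1+V_2)^\bot$, using $V_{20}\subset V_1^\bot$ for the first and $\nabla E_{\alpha_0}\in V_1\cap V_2=V_0$ plus skew-symmetry for the second. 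In the paper the implication runs the other way: the frame identities come first, and the parallelism of $V_1$, $V_2$ is recorded only afterwards (Corollary \ref{lem4-4}, in Case 2) as a consequence. Your version proves slightly more (no special frame choice is needed, only adaptedness) and is the only one of the two that addresses the constant-rank/smoothness issue for $V_1$, $V_2$, $V_0$, which the paper passes over in silence.

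One formula you should correct, though it does not break the proof: $\nabla_{E_k}\xi=\sum_\alpha B^\alpha_{i_1j_1\cdots k_1k}E_\alpha$ is not an identity, because $B^\alpha_{i_1j_1\cdots k_1k}$ are components of the \emph{full} covariant derivative of $B$; for fixed lower indices the normal derivative of the section $\xi$ equals $\sum_\alpha B^\alpha_{i_1j_1\cdots k_1k}E_\alpha$ plus Levi-Civita correction terms of the form $\sum_q\omega^q_{i_1}(E_k)\bigl(\sum_\alpha B^\alpha_{qj_1\cdots k_1}E_\alpha\bigr)+\cdots$ coming from the frozen tangential indices. These corrections are harmless: for $q$ of type $2$ they vanish, since $B^\alpha_{q_2j_1\cdots k_1}=0$ by Lemma \ref{lem4.1} itself (equivalently $\omega^{q_2}_{i_1}=0$ for the product metric), and for $q$ of type $1$ they are again generators of $V_1$. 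So the conclusion $\nabla_{E_k}\xi\in V_1$ --- the only thing you actually use --- survives, and with that repair the proposal is complete.
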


Let $Y$ and $N$ be the \mo position vector and the \mo biposition vector of $x$, respectively. Motivated by \cite{lwz}, see also \cite{lw1}, \cite{lz05} and the very recent paper \cite{zhw}, we define another vector-valued function
\be\label{4-15}
{\mathbf c}:=N+aY+bE_{\alpha_0}
\ee
for some constants $a$, $b$ to be determined. Then by using \eqref{2-6} and \eqref{2-8} we find
$$
d{\mathbf c}=(a+\lambda_1-bB^{\alpha_0}_1)\omega^{i_1}Y_{i_1}
+(a+\lambda_2-bB^{\alpha_0}_2)\omega^{i_2}Y_{i_2}.
$$
Since
$$B^{\alpha_0}_1-B^{\alpha_0}_2 =\sqrt{\fr{m_2}{m_1}(\lambda_1+\lambda_2)}+\sqrt{\fr{m_1}{m_2}(\lambda_1+\lambda_2)} =m\sqrt{\fr{\lambda_1+\lambda_2}{m_1m_2}}>0,
$$
the system of linear equations
$$
a+\lambda_1-bB^{\alpha_0}_1=0,\quad a+\lambda_2-bB^{\alpha_0}_2=0
$$
for $a$, $b$ has a unique solution as
\be\label{4-16}
a=-\fr{m_1\lambda_1+m_2\lambda_2}{m},\quad b=\fr{\lambda_1-\lambda_2}{m}\sqrt{\fr{m_1m_2}{\lambda_1+\lambda_2}}.
\ee
Thus the following lemma is proved:

\begin{lem}\label{lem4-3} Let $a$, $b$ be given by \eqref{4-16}. Then the vector-valued function ${\mathbf c}$ defined by \eqref{4-15} is constant on $M^m$ and
\be\label{4-17}
\lagl {\mathbf c},{\mathbf c}\ragl_1=2a+b^2,\quad \lagl {\mathbf c},Y\ragl_1=1.
\ee
\end{lem}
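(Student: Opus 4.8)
The goal is to show that $\mathbf c = N + aY + bE_{\alpha_0}$ is a constant vector in $\bbr^{m+p+2}_1$ and to compute the two inner products in \eqref{4-17}. Constancy is purely a matter of verifying $d\mathbf c = 0$; the inner product identities are then algebraic consequences of the relations \eqref{2.5} together with Lemma \ref{lem4-2}. The computation of $d\mathbf c$ is already carried out in the text preceding the lemma, so my task is mostly to assemble it cleanly and then record the two products.

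First I would compute $d\mathbf c$ directly from the equations of motion. Using the expansions $dN = \sum A_{ij}\omega^jY_i + \sum C^\alpha_i\omega^iE_\alpha$ from \eqref{2-6}, $dY = \sum Y_i\omega^i$ from \eqref{2-6}, and $dE_{\alpha_0} = -\sum C^{\alpha_0}_i\omega^iY - \sum B^{\alpha_0}_{ij}\omega^jY_i + \sum \omega^\beta_{\alpha_0}E_\beta$ from \eqref{2-8}, I form
\[
d\mathbf c = dN + a\,dY + b\,dE_{\alpha_0}.
\]
Here the vanishing \mo form gives $C^\alpha_i\equiv 0$, so the $E_\alpha$-terms from $dN$ and the $Y$-term from $dE_{\alpha_0}$ drop out. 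The surviving $E_\beta$-terms from $b\,dE_{\alpha_0}$ vanish by Lemma \ref{lem4-2}, which guarantees $\omega^\beta_{\alpha_0}=0$ for all $\beta$. What remains is a combination of the tangential vectors $Y_i$. By the block-diagonal structure \eqref{4-1} of $A$ and the block form \eqref{4-9}--\eqref{4-10} of $B^{\alpha_0}$ (with $B^{\alpha_0}_{i_1j_1}=B^{\alpha_0}_1\delta_{i_1j_1}$ on the first block and $B^{\alpha_0}_{i_2j_2}=B^{\alpha_0}_2\delta_{i_2j_2}$ on the second, and off-diagonal blocks zero by \eqref{4-2}), this is exactly
\[
d\mathbf c = (a+\lambda_1 - bB^{\alpha_0}_1)\sum_{i_1}\omega^{i_1}Y_{i_1}
+ (a+\lambda_2 - bB^{\alpha_0}_2)\sum_{i_2}\omega^{i_2}Y_{i_2},
\]
reproducing the displayed expression. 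The choice \eqref{4-16} of $a,b$ makes both coefficients vanish, so $d\mathbf c = 0$ and $\mathbf c$ is constant.

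Next I would verify \eqref{4-17}. Expanding $\lagl\mathbf c,\mathbf c\ragl_1$ and using the orthogonality relations $\lagl Y,Y\ragl_1 = \lagl N,N\ragl_1 = 0$, $\lagl Y,N\ragl_1 = 1$ from \eqref{2.5}, together with $\lagl E_{\alpha_0},E_{\alpha_0}\ragl_1 = 1$, $\lagl Y,E_{\alpha_0}\ragl_1 = \lagl N,E_{\alpha_0}\ragl_1 = 0$ (since $E_{\alpha_0}\in V$ is orthogonal to both $Y$ and $N$ in the decomposition \eqref{2.6}), I get
\[
\lagl\mathbf c,\mathbf c\ragl_1 = 2a\lagl N,Y\ragl_1 + a^2\lagl Y,Y\ragl_1 + b^2\lagl E_{\alpha_0},E_{\alpha_0}\ragl_1 = 2a + b^2.
\]
Likewise $\lagl\mathbf c,Y\ragl_1 = \lagl N,Y\ragl_1 + a\lagl Y,Y\ragl_1 + b\lagl E_{\alpha_0},Y\ragl_1 = 1$, which is the second identity.

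The argument is essentially bookkeeping, so there is no deep obstacle; the one place demanding care is the differentiation of $d\mathbf c$. I must be certain that \emph{all} non-tangential contributions genuinely cancel — in particular the $E_\beta$-terms, whose vanishing relies crucially on $\omega^\beta_{\alpha_0}=0$ from Lemma \ref{lem4-2}, and the $Y$-terms, whose vanishing uses $C\equiv 0$. If either of these auxiliary facts were unavailable the function $\mathbf c$ would fail to be constant, so the delicate point is simply to invoke the right earlier results rather than to perform any hard estimate.
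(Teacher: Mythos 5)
Your proposal is correct and follows essentially the same route as the paper: the paper's proof is precisely the displayed computation of $d{\mathbf c}$ preceding the lemma (using \eqref{2-6}, \eqref{2-8}, the hypothesis $C\equiv 0$, Lemma \ref{lem4-2}, and the block forms \eqref{4-1}, \eqref{4-9}--\eqref{4-10}), after which the choice \eqref{4-16} annihilates both coefficients, and the identities \eqref{4-17} are the same immediate consequence of \eqref{2.5} and the orthogonality of $E_{\alpha_0}$ to $Y$ and $N$ in \eqref{2.6}. Your write-up merely makes explicit the cancellations the paper leaves implicit; there is no gap.
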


Next we consider separately the following three subcases:

Subcase (1): ${\mathbf c}$ is time-like, that is, $\lagl {\mathbf
c},{\mathbf c}\ragl_1=-r^2<0$ for some positive number $r$. In
this case, there exists a $T\in O^+(m+p+1,1)$ such that $\td{\mathbf
c}:=T({\mathbf c})=(-r,0)$ with $0\in\bbr^{m+p+1}$. So
\be\label{4.23}
\td {\mathbf c}=(-r,0)=T(N)+aT(Y)+bT(E_{\alpha_0}).
\ee
If we write $\td Y:=T(Y)=(\td Y_0,\td Y_1)$ with $\td Y_1\in\bbr^{m+p+1}$, and let $\td x:M^m\to \bbs^{m+p}$ be the immersion with $\td Y$ as its \mo position vector, then $\td Y_0>0$ and $x$ is \mo equivalent to $\td x$ for which the \mo factor $\td \rho=\td Y_0$. From \eqref{4-17} and \eqref{4.23} we find
$$r\td \rho=r\td Y_0=\lagl \td{\mathbf c},\td Y\ragl_1=1.$$
Hence
\be\label{4-18}\td \rho=r^{-1}=\const.\ee

On the other hand, since the \mo form $\td C$ of $\td x$ vanishes
identically, we know from \eqref{2.11} and \eqref{4-18} that $\td
H^\alpha_{,i}=0$, that is, the mean curvature vector of $\td x$ is
parallel. In particular, the mean curvature $|\td H|$ of $\td x$ is also constant. This with \eqref{2.12}, \eqref{4-18} shows that the second fundamental form $\td H\cdot \td h=\td H^\alpha\td h^\alpha$ of $\td x$ in the direction of mean curvature $\td H$ is parallel, that is, $\td x$ is pseudo-parallel.

On the other hand, the \mo metric is the same as that of $x$ which can be written as $g=r^{-2}d\td x\cdot d\td x$. Therefore, if $\td R$ is the scalar curvature of the metric
$d\td x\cdot d\td x$, then
$$\td R=m(m-1)r^{-2}\kappa=\const$$ since, by \eqref{2.14} and
$$\tr A=m_1\lambda_1+m_2\lambda_2=\const,$$
$\kappa$ is constant.

Furthermore, since the Blaschke tensor $A$ of $x$, which is equivalent to that of $\td x$, has two distinct eigenvalues,  it follows from \eqref{2.12} that $\td H\cdot\td h$ must has two distinct eigenvalues, or the same, $\td x$ has two distinct principal curvatures in the direction of the mean curvature vector.

Subcase (2): ${\mathbf c}$ is light-like, that is, $\lagl {\mathbf
c},{\mathbf c}\ragl_1=0$. In this case, there exists a $T\in
O^+(m+p+1,1)$ such that $\td{\mathbf c}:=T({\mathbf c})=(-1,1,0)$
with $0\in\bbr^{m+p}$. So
\be\label{4.26}
\td {\mathbf c}=(-1,1,0)=T(N)+aT(Y)+bT(E_{\alpha_0}).
\ee
If we write $\td Y=T(Y)=(\td Y_0,\td Y_1)$, then $\td Y_0>0$.
Define $\td x=\td Y^{-1}_0\td Y_1$, then
\be\label{4.27}
\td Y=\td\rho(1,\td x),\mb{\ where\ }\td\rho=\td Y_0.
\ee
It is clear that $\td x: M^m\to\bbs^{m+p}$ is an immersion with
$\td Y$ as its \mo position vector. Therefore $x$ is \mo
equivalent to $\td x$. Write $\td
x=(\td x_0,\td x_1)$ in which $\td x_1\in\bbr^{m+p}$. Then by \eqref{4.27},
$\td Y=(\td\rho,\td\rho\td x_0,\td\rho\td x_1)$. From \eqref{4-17} and
\eqref{4.26} we know that $\lagl\td Y,\td{\mathbf c}\ragl_1=1$, i.e.
$\td\rho(1+\td x_0)=1$. So $1+\td x_0> 0$ and $\td\rho=(1+\td
x_0)^{-1}$. In particular, $\td x_0> -1$. This indicates that
$\td x(M^m)\subset \bbs^{m+p}\bsl\{(-1,0)\}$, so that we can consider the pre-image under $\sigma$ of $\td x$, that is, $\bar x:=\sigma^{-1}\circ\td
x$. Then $\bar x:M^m\to \bbr^{m+p}$ is an umbilic-free immersion since $\td x$ is.

By the definition \eqref{1.1} of $\sigma$, one sees that
$$
\td x_0=\fr{1-|\bar x|^2}{1+|\bar x|^2},\quad\td x_1=\fr{2\bar x}{1+|\bar x|^2}.
$$
Thus
\be\label{4.28}\td\rho=(1+\td x_0)^{-1}=\fr12(1+|\bar x|^2),\ee
so that
\be\label{4.29}\td Y=\td \rho(1,\td x)=\fr12(1+|\bar x|^2,1-|\bar x|^2,2\bar x).\ee
From
\eqref{4.29} one easily finds that the \mo metric $g$ can be written as
$$g=\lagl d\td Y,d\td Y\ragl_1=d\bar x\cdot d\bar x.$$
Using a theorem of Liu-Wang-Zhao in \cite{lwz} we see that the \mo factor $\ol\rho$ of $\bar x$
(cf. \eqref{2.1}) is identical to the constant $1$. Clearly, the scalar curvature
$\ol R$ of $\bar x$ is a constant since, once more, $\kappa$ is constant. Again, by \cite{lwz} we know that all the components $\ol
C^\alpha_i$ of the \mo form $\ol C$ of $\bar x$ vanish. It then follows by \eqref{2.11} and Remark \ref{rmk2} that the mean curvature vector field $\ol H$ of $\bar x$ is parallel. Finally, as in the subcase (1), it follows easily from \eqref{2.12'} that $\bar x$ is pseudo-parallel and has two distinct principal curvatures in the direction of the mean curvature vector.

Subcase (3): ${\mathbf c}$ is space-like, that is, $\lagl {\mathbf
c},{\mathbf c}\ragl_1=r^2>0$ for some $r>0$. In this case, there
exists a $T\in O^+(m+p+1,1)$ such that
$$\td{\mathbf
c}:=T({\mathbf c})=(0,r,0)\in\bbr_1\times\bbr\times\bbr^{m+p}\equiv
\bbr^{m+p+2}_1.$$
So
\be\label{4.30}
\td {\mathbf c}=(0,r,0)=T(N)+aT(Y)+bT(E_{\alpha_0}).
\ee
Similar to the subcase (2), if we write $\td Y=T(Y)=(\td Y_0,\td Y_1)$ with $\td Y_1\in\bbr^{m+p+1}$, then $\td Y_0>0$ and we can define $\td x=\td Y^{-1}_0\td Y_1$, so that
\be\label{4.31}
\td Y=\td\rho(1,\td x),\mb{\ where\ }\td\rho=\td Y_0.
\ee
Once again, $\td x:M^m\to\bbs^{m+p}$ is an immersion with
$\td Y$ as its \mo position vector. Therefore $x$ is \mo
equivalent to the immersion $\td x$. Write $\td
x=(\td x_0,\td x_1)$ for some $\td x_1\in\bbr^{m+p}$. Then by \eqref{4.31},
$\td Y=(\td\rho,\td\rho\td x_0,\td\rho\td x_1)$. From \eqref{4-17} and
\eqref{4.30} we know that $\lagl\td Y,\td{\mathbf c}\ragl_1=1$ and thus
$r\td\rho\td x_0=1$, implying $\td x_0> 0$. Hence $\td
x(M^m)\subset \bbs^{m+p}_+$. By defining $\bar x:=\tau^{-1}\circ\td x$, the pre-image of $\td x$ under $\tau$, we obtain an immersion $\bar x:M^m\to \bbh^{m+p}$ which is umbilic-free since $\td x$ is.

Write $\bar x=(\bar x_0,\bar x_1)$ with $\bar x_1\in \bbr^{m+p}$. Then by the definition \eqref{1.2}
of $\tau$, one sees that
\be\label{4.32}\td x_0=\bar x^{-1}_0,\quad\td x_1=\bar x^{-1}_0\bar x_1.\ee
Thus $\td\rho=(r\td x_0)^{-1}=r^{-1}\bar x_0$ so that
\be\label{4.33}\td Y=\td \rho(1,\td x)=r^{-1}(\bar x_0,1,\bar x_1).\ee
From \eqref{4.33} one easily finds
that the \mo metric $g$ can be written as
\be\label{4.34} g=\lagl d\td Y,d\td Y\ragl_1=r^{-2}\lagl d\bar x,d\bar x\ragl_1.\ee
By \cite{lwz} we see that the \mo factor $\ol\rho$ of $\bar x$ (cf.
\eqref{2.1}) is identical to $r^{-1}$. Then, similar to the subcase (2), we can use
\cite{lwz} and \eqref{2.12''} to show that $\bar x$ is pseudo-parallel with constant scalar curvature, parallel mean curvature vector field, and has two distinct principal curvatures in the direction of the mean curvature vector.

Case 2. $\lambda_1+\lambda_2=0$.

In this case, we can assume that $\lambda_1<0$ and thus $\lambda_2>0$.
Define a positive number $r$ by $r^{-2}=2\lambda_2$. Then $2\lambda_1=-r^{-2}$.

By Lemma \ref{lem4-1} it is readily that $\dim V_0=0$, so that  $V_1=V_{10}$, $V_2=V_{20}$. In particular, $V_1 \bot V_2$ and thus the \mo normal bundle $V\to M^m$ splits orthogonally into direct sum of $V_1$ and $V_2$:
$V=V_1\oplus V_2$. Furthermore, by Lemma \ref{lem4-1}, there is an orthonormal normal frame field $\{E_\alpha\}$ such that the \mo normal connection forms $\omega^\beta_\alpha$ meet
\be\label{4-35}
\omega^{\beta_2}_{\alpha_1}\equiv 0, \mb{ where }E_{\alpha_1},E_{\beta_1},\cdots\in V_1,\quad E_{\alpha_2},E_{\beta_2},\cdots\in V_2.
\ee

Define
$$
y=-\fr1{2\lambda_2}(N-\lambda_2 Y),\quad
y_2=\fr1{2\lambda_2}(N+\lambda_2 Y).
$$
Then $y+y_2=Y$ and
\be\label{4.38}
\lagl y,y\ragl_1=-\fr1{2\lambda_2}=-r^2<0,\quad \lagl y_2,y_2\ragl_1=\fr1{2\lambda_2}=r^2>0.
\ee
Furthermore, by \eqref{2-6}, \eqref{4-1} and $\lambda_1+\lambda_2=0$, we find that
\begin{align}
&dy=-\fr1{2\lambda_2}\left(\sum_{i,j}A_{ij}\omega^jY_i-\lambda_2\sum_i\omega^iY_i\right) =\sum_{i_1}\omega^{i_1}Y_{i_1};\label{4.39}\\
&dy_2=\fr1{2\lambda_2}\left(\sum_{i,j}A_{ij}\omega^jY_i-\lambda_1\sum_i\omega^iY_i\right) =\sum_{i_2}\omega^{i_2}Y_{i_2}.\label{4.40}
\end{align}
Thus $y$ and $y_2$ is constant on $M_2$ and $M_1$, respectively.

Using \eqref{2-7}, \eqref{2-8}, \eqref{4-35}, \eqref{4.39} and \eqref{4.40}, we can easily obtain the following conclusion:

\begin{cor}\label{lem4-4} The subbundles $V_1$ and $V_2$ are parallel in the \mo normal bundle $V$, and the \mo normal connection on $V$ is the direct sum of its restriction on $V_1$ and its restriction on $V_2$. Moreover,
$$\bbr y\oplus V_1\oplus TM_1,\quad\bbr y_2\oplus V_2\oplus TM_2$$
are orthogonal to each other in $\bbr^{m+p+2}_1$ and constant on $M_1$ and $M_2$, respectively.
\end{cor}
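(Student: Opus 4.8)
The plan is to read off every assertion from the structure equations \eqref{2-7}, \eqref{2-8}, together with the vanishing of the \mo form $C$, the splitting \eqref{4-35} of the \mo normal connection, and the standing hypothesis $\lambda_1+\lambda_2=0$. Throughout I would write $W_1:=\bbr y\oplus V_1\oplus TM_1$ and $W_2:=\bbr y_2\oplus V_2\oplus TM_2$, and I would exploit the block structure of $B$ forced by \eqref{4-2} and \eqref{4-13}: the only possibly nonzero components are $B^{\alpha_1}_{i_1j_1}$ and $B^{\alpha_2}_{i_2j_2}$.

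First I would establish the parallelism of $V_1,V_2$ and the splitting of the normal connection simultaneously. Since $C\equiv 0$, equation \eqref{2-8} reduces to $dE_\alpha=-\sum B^\alpha_{ij}\omega^jY_i+\sum_\beta\omega^\beta_\alpha E_\beta$, so the $V$-valued part of $dE_{\alpha_1}$ is $\sum_\beta\omega^\beta_{\alpha_1}E_\beta$. By \eqref{4-35} the mixed forms $\omega^{\beta_2}_{\alpha_1}$ vanish, and by the skew-symmetry $\omega^\beta_\alpha=-\omega^\alpha_\beta$ of the normal connection in an orthonormal frame so do the $\omega^{\alpha_1}_{\beta_2}$; hence this part lies in $V_1$, and the analogous computation handles $V_2$. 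This shows at once that $V_1,V_2$ are parallel subbundles and that the normal connection is the direct sum of its restrictions to them.

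Next I would verify $W_1\bot W_2$. Because $y$ and $y_2$ are combinations of $Y$ and $N$, which by \eqref{2.5} and the decomposition \eqref{2.6} are $\lagl\cdot,\cdot\ragl_1$-orthogonal to every $Y_i$ and $E_\alpha$, the lines $\bbr y,\bbr y_2$ are orthogonal to $V_1,V_2,TM_1,TM_2$; a one-line computation with \eqref{2.5} gives $\lagl y,y_2\ragl_1=0$ as well. Together with $V_1\bot V_2$ (from $V=V_1\oplus V_2$), the product orthogonality $TM_1\bot TM_2$, and $E_\alpha\bot Y_i$, this yields $W_1\bot W_2$; since $\dim W_1+\dim W_2=(1+\dim V_1+m_1)+(1+\dim V_2+m_2)=m+p+2$, in fact $\bbr^{m+p+2}_1=W_1\oplus W_2$.

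Finally, for the constancy I would show that the differential of each generator of $W_1$ again lies in $W_1$. By \eqref{4.39}, $dy=\sum_{i_1}\omega^{i_1}Y_{i_1}\in TM_1$; using the block structure of $B$ and \eqref{4-35}, equation \eqref{2-8} gives $dE_{\alpha_1}\in TM_1\oplus V_1$; and in \eqref{2-7} the connection term lies in $TM_1$ (product metric) while the $B$-term lies in $V_1$. The one place where a term threatens to leave $W_1$ is the $Y$-and-$N$ part of $dY_{i_1}$, namely $-\sum_jA_{i_1j}\omega^jY-\omega^{i_1}N=-\omega^{i_1}(\lambda_1Y+N)$. Here the hypothesis $\lambda_1+\lambda_2=0$ is exactly what is needed: the definition of $y$ gives $\lambda_1Y+N=-2\lambda_2 y$, so this term equals $2\lambda_2\omega^{i_1}y\in\bbr y$. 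Thus $dY_{i_1}\in W_1$, every generator of $W_1$ differentiates back into $W_1$, and $W_1$ is therefore a fixed Lorentzian subspace along the connected $M^m$; in particular it is constant on the factor $M_1$, and the symmetric argument (with \eqref{4.40}) handles $W_2$. I expect this identity forcing the $Y,N$ part of $dY_{i_1}$ into $\bbr y$ via $\lambda_1+\lambda_2=0$ to be the only delicate point; everything else is bookkeeping with the index ranges \eqref{4-1} and the vanishing relations \eqref{4-2}, \eqref{4-13}.
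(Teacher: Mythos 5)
Your proposal is correct and follows exactly the route the paper intends: the paper gives no detailed argument, simply asserting that the Corollary follows from \eqref{2-7}, \eqref{2-8}, \eqref{4-35}, \eqref{4.39} and \eqref{4.40}, and your proof is precisely the fleshed-out version of that citation, including the one genuinely delicate point that $\lambda_1+\lambda_2=0$ forces the $Y,N$-part of $dY_{i_1}$ into $\bbr y$ via $\lambda_1Y+N=-2\lambda_2y$. No gaps; your stronger conclusion that both subspaces are constant along all of the connected $M^m$ is exactly what the subsequent Remark (the existence of $T\in O^+(m+p+1,1)$ mapping them onto $\bbr^{m_1+p_1+1}_1$ and $\bbr^{m_2+p_2+1}$) requires.
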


\begin{rmk}\rm By Corollary \ref{lem4-4} and \eqref{4.38}, there exists an element $T\in O^+(m+p+1,1)$, such that
\be
T(\bbr y\oplus V_1\oplus TM_1)=\bbr^{m_1+p_1+1}_1,\quad
T(\bbr y_2\oplus V_2\oplus TM_2)=\bbr^{m_2+p_2+1}.
\ee
\end{rmk}

 Corollary \ref{lem4-4} implies that, by restriction, we can identify the subbundle $V_1$ (resp. $V_2$) of $V$ with a vector bundles $V_1\to M_1$ on $M_1$ (resp. $V_2\to M_2$ on $M_2$). In this sense, the \mo normal bundle $V\to M^m$ with the \mo normal connection is the direct product of $V_1\to M_1$ and $V_2\to M_2$ with their induced connections.

Now from the \mo second fundamental form $B$, we define
$$
\Ba=\sum B^{\alpha_1}_{i_1j_1}\omega^{i_1}\omega^{j_1}E_{\alpha_1},\quad
\Bb=\sum B^{\alpha_2}_{i_2j_2}\omega^{i_2}\omega^{j_2}E_{\alpha_2}.
$$
Then $\Ba$ (resp. $\Bb$) is a $V_1$-valued (resp.  $V_2$-valued) symmetric $2$-form on $M_1$ (resp. on $M_2$) with components $\Ba^{\alpha_1}_{i_1j_1}=B^{\alpha_1}_{i_1j_1}$ (resp. $\Bb^{\alpha_2}_{i_2j_2}=B^{\alpha_2}_{i_2j_2}$).

Let $\Ba^{\alpha_1}_{i_1j_1,k_1}$ (resp. $\Bb^{\alpha_2}_{i_2j_2,k_2}$) be the components of the covariant derivative of $\Ba$ (resp. $\Bb$). Then, as the consequence of \eqref{4-2}, \eqref{4-35} and
Corollary \ref{lem4-4}, we have
\be\label{4-36}
\Ba^{\alpha_1}_{i_1j_1,k_1}=B^{\alpha_1}_{i_1j_1k_1},\quad \Bb^{\alpha_2}_{i_2j_2,k_2}=B^{\alpha_1}_{i_2j_2k_2}.
\ee

Since $B^{\alpha_2}_{i_1j_1}=B^{\alpha_1}_{i_2j_2}=0$, the vanishing of the \mo form $C$ together with \eqref{2.15}, \eqref{4-1}, \eqref{2.18}, \eqref{4-2}, \eqref{4-36} and \eqref{2-16} proves t easily he following lemma:

\begin{lem}\label{lem4-5}
The Riemannian manifold $(M_1,g^{(1)})$ (resp. $(M_2,g^{(2)})$) and the vector bundle valued symmetric tensor $\Ba$ (resp. $\Bb$) satisfies the Gauss equation, Codazzi equation and Ricci equation for submanifolds in a space form of constant curvature $2\lambda_1$ (resp. $2\lambda_2$). Namely
\begin{align}
R_{i_1j_1k_1l_1}=&\sum (\Ba^{\alpha_1}_{i_1l_1}\Ba^{\alpha_1}_{j_1k_1} -\Ba^{\alpha_1}_{i_1k_1}\Ba^{\alpha_1}_{j_1l_1}) +2\lambda_1(\delta_{i_1l_1}\delta_{j_1k_1}-\delta_{i_1k_1}\delta_{j_1l_1}),\label{4-37}\\
R_{i_2j_2k_2l_2}=&\sum (\Bb^{\alpha_2}_{i_2l_2}\Bb^{\alpha_2}_{j_2k_2} -\Bb^{\alpha_2}_{i_2k_2}\Bb^{\alpha_2}_{j_2l_2}) +2\lambda_2(\delta_{i_2l_2}\delta_{j_2k_2}-\delta_{i_2k_2}\delta_{j_2l_2}),\label{4-38}\\
\Ba^{\alpha_1}_{i_1j_1,k_1}=&\Ba^{\alpha_1}_{i_1k_1,j_1},\quad \Bb^{\alpha_2}_{i_2j_2,k_2}=\Bb^{\alpha_2}_{i_2k_2,j_2},\\
R^\bot_{\alpha_1\beta_1i_1j_1}=&\sum (\Ba^{\alpha_1}_{j_1k_1}\Ba^{\beta_1}_{i_1k_1} -\Ba^{\alpha_1}_{i_1k_1}\Ba^{\beta_1}_{j_1k_1}),\\
R^\bot_{\alpha_2\beta_2i_2j_2}=&\sum (\Bb^{\alpha_2}_{j_2k_2}\Bb^{\beta_2}_{i_2k_2} -\Bb^{\alpha_2}_{i_2k_2}\Bb^{\beta_2}_{j_2k_2}).
\end{align}
\end{lem}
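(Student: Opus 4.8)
The plan is to derive all the stated equations by specializing the three ambient \mo structure equations---the Gauss equation \eqref{2.15}, the Codazzi identity \eqref{2.18} and the normal Ricci equation \eqref{2-16}---to index blocks lying entirely within a single factor, and then simplifying by means of the block structure already established in this case. I would carry out the computation for the factor $(M_1,g^{(1)})$ only; the statements for $(M_2,g^{(2)})$ follow verbatim upon interchanging the roles of the two blocks and replacing $\lambda_1$ by $\lambda_2$.

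For the Gauss equation I would start from \eqref{2.15} with all four tangent indices taken from $\{1,\dots,m_1\}$. Because $g=g^{(1)}\times g^{(2)}$ is a Riemannian product, the left-hand side $R_{i_1j_1k_1l_1}$ is exactly the intrinsic curvature of $(M_1,g^{(1)})$. On the right-hand side, substituting the diagonal form \eqref{4-1} of $A$ collapses the four terms $A_{i_1l_1}\delta_{j_1k_1}-A_{i_1k_1}\delta_{j_1l_1}+A_{j_1k_1}\delta_{i_1l_1}-A_{j_1l_1}\delta_{i_1k_1}$ into $2\lambda_1(\delta_{i_1l_1}\delta_{j_1k_1}-\delta_{i_1k_1}\delta_{j_1l_1})$, while in the quadratic $B$-sum every summand carrying a $V_2$ index dies, since $B^{\alpha_2}_{i_1j_1}=0$; the surviving terms are indexed by $V_1$ and coincide with the corresponding components of $\Ba$. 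This yields \eqref{4-37}.

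The Codazzi equation is immediate: with $C\equiv 0$ the Ricci identity \eqref{2.18} reduces to the full symmetry $B^{\alpha_1}_{i_1j_1k_1}=B^{\alpha_1}_{i_1k_1j_1}$, and the identification \eqref{4-36} of the restricted derivative with the intrinsic one converts this into $\Ba^{\alpha_1}_{i_1j_1,k_1}=\Ba^{\alpha_1}_{i_1k_1,j_1}$. For the Ricci equation I would take \eqref{2-16} with tangent indices $i_1,j_1\le m_1$ and normal indices $\alpha_1,\beta_1$ belonging to $V_1$; the summation variable $k$ a priori runs over all of $\{1,\dots,m\}$, but the mixed components vanish by \eqref{4-2} (equivalently \eqref{4-2.1}), so only $k_1\le m_1$ contribute. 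By Corollary \ref{lem4-4} the sub-bundle $V_1$ is parallel in the \mo normal bundle, so $R^\bot_{\alpha_1\beta_1i_1j_1}$ agrees with the normal curvature of $V_1\to M_1$ carrying its induced connection; this gives the desired Ricci equation, and the same three arguments repeated on the second block produce \eqref{4-38} and its companions.

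The bulk of this lemma is a substitution, so the computations themselves are \emph{routine}; the genuine content---and the step I would treat most carefully---is the battery of identifications that allow the restricted ambient identities to be read as the intrinsic structure equations of an abstract submanifold of a space form. Concretely, one must be sure that (i) the product curvature restricted to a factor is that factor's curvature, (ii) the covariant derivative of $\Ba$ formed with the Levi-Civita connection of $g^{(1)}$ and the induced connection on $V_1\to M_1$ really equals the restricted \mo derivative $B^{\alpha_1}_{i_1j_1k_1}$, and (iii) the normal curvature restricts correctly to $V_1$. Items (ii) and (iii) are precisely what \eqref{4-36} and Corollary \ref{lem4-4} supply, so once these are in hand the only remaining vigilance is over the collapse of summation ranges, which is controlled throughout by the vanishing-component results \eqref{4-2} and \eqref{4-2.1}.
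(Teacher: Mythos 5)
Your proposal is correct and follows essentially the same route as the paper, which proves the lemma precisely by specializing \eqref{2.15}, \eqref{2.18} and \eqref{2-16} to single-block indices and invoking the vanishing of $C$, the diagonal form \eqref{4-1} of $A$, the block-vanishing \eqref{4-2}, the identification \eqref{4-36}, and Corollary \ref{lem4-4}. Your added care about the three identifications (product curvature, restricted covariant derivative, restricted normal curvature) is exactly the content the paper compresses into its one-sentence citation of those equations.
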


Since $2\lambda_1=-\fr1{r^2}<0$, $2\lambda_2=\fr1{r^2}>0$, Lemma \ref{lem4-5} shows that there exist an isometric immersion
$$\td y\equiv(\td y_0,\td y_1):(M_1,g^{(1)})\to\bbh^{m_1+p_1}\left(-\fr1{r^2}\right)\subset\bbr^{m_1+p_1+1}_1$$
with $\Ba$ as its second fundamental form, and an isometric immersion
$$\td y_2:(M_2,g^{(2)})\to\bbs^{m_2+p_2}(r)\subset\bbr^{m_2+p_2+1}$$
with $\Bb$ as its second fundamental form.

Note that $B^{\alpha_2}_{i_1j_1}=B^{\alpha_1}_{i_2j_2}\equiv 0$. It follows from \eqref{2.14} that both $\td y$ and $\td y_2$ are minimal immersions. Furthermore, if $\td S_1$ and $\td S_2$ denote, respectively, the scalar curvatures of $M_1$ and $M_2$, then by \eqref{4-37}, \eqref{4-38} and the minimality, we have
\be\label{4-39}
\td S_1=-\fr{m_1(m_1-1)}{r^2}-\sum (B^{\alpha_1}_{i_1j_1})^2,\quad \td S_2=\fr{m_2(m_2-1)}{r^2}-\sum (B^{\alpha_2}_{i_2j_2})^2,
\ee
showing that
\begin{align}
\td S_1+\fr{m_1(m_1-1)}{r^2}=-\sum(B^{\alpha_1}_{i_1j_1})^2 \leq 0,\label{4-39.1}\\ \td S_2-\fr{m_2(m_2-1)}{r^2}=-\sum(B^{\alpha_2}_{i_2j_2})^2 \leq 0.\label{4-39.2}
\end{align}
Thus by \eqref{2.14},
\begin{align}
\td S_1+\td S_2=&\fr{-m_1(m_1-1)+m_2(m_2-1)}{r^2}-\fr{m-1}{m}=\const. \label{4-40}
\end{align}

Since $\td S_1$ and $\td S_2$ are functions defined on $M_1$ and $M_2$, respectively, it follows that both $\td S_1$ and $\td S_2$ are constant and, by \eqref{4-39.1}, \eqref{4-39.2}
\be\label{s1s2}
\td S_1=-\fr{m_1(m_1-1)}{r^2}-\fr{m-1}{m}\mu,\quad \td S_2=\fr{m_2(m_2-1)}{r^2}-\fr{m-1}{m}(1-\mu)
\ee
for some constant $\mu\in [0,1]$.

Now we are in a position to complete the proof of the main theorem (Theorem \ref{main}).

As discussed earlier in this section, there are only the following two cases that need to be considered:

If $\lambda_1+\lambda_2\neq 0$, then $x$ is \mo equivalent to the three kinds of submanifolds (1), (2) and (3) as listed in Theorem \ref{main}, according to the Subcases (1), (2) and (3), respectively;

If $\lambda_1+\lambda_2=0$, then have two immersions
$$\td y:(M_1,g^{(1)})\to \bbh^{m_1+p_1}\left(-\fr1{r^2}\right),\quad \td y_2:(M_2,g^{(2)})\to \bbs^{m_2+p_2}(r),$$
which are minimal and, by \eqref{s1s2}, have constant scalar curvatures $\td S_1$, $\td S_1$, respectively.

Let ${\rm LS}(m_1,p_1,r,\mu)$ be one of the submanifolds in Example \ref{expl3.2} defined by $\td y$ and $\td y_2$. Then it is not hard to see that ${\rm LS}(m_1,p_1,r,\mu)$ has the same \mo metric $g$ and the same \mo second fundamental form $B$ as those of $x$. Furthermore, by choosing the normal frame field $\{\td e_\alpha\}$ as given in \eqref{3-10} and \eqref{3-11} where, in the present case,
$$\bar e_\alpha=E_\alpha,\quad m+1\leq \alpha\leq m+p,$$
we compute directly (cf. \eqref{3-37}):
$$
\td\omega^\beta_\alpha=d\td e_\alpha\cdot\td e_\beta=\lagl dE_\alpha,E_\beta\ragl_1=\begin{cases}\omega^\beta_\alpha,& \mb{for both } m+1\leq\alpha,\beta\leq m+p_1,\\
&\mb{and } m+p_1+1\leq\alpha,\beta\leq m+p;\\
0,& otherwise,
\end{cases}
$$
implying that $x$ and ${\rm LS}(m_1,p_1,r,\mu)$ have the same \mo normal connection. Therefore, by Theorem \ref{wth}, $x$ is \mo equivalent to ${\rm LS}(m_1,p_1,r,\mu)$ and we are done.

\end{document}